\date{}
\newtheorem{Theorem}{Theorem}[section]
\newtheorem{Lemma}{Lemma}[section]
\newtheorem{Proposition}[Lemma]{Proposition}
\newtheorem{Remark}{Remark}[section]
\def\R{{\mathbb{R}}}
\title{Global Well-posedness for 2D non-resistive MHD equations in half-space  }
\author{  Zhaoyun Zhang$^1$, \  \ Xiaopeng Zhao$^2$\thanks{Corresponding author: zhaoxiaopeng@mail.neu.edu.cn 
 \ \    }
    \\
 {\small
  1.  School of Mathematical Sciences,
     Shanghai Jiao Tong University, Shanghai 200201,  China}\\
  {\small
  2. College of Sciences, Northeastern University, Shenyang 110819,    China}
 % {\small
  % 3. *, China}
 }
\begin{document}

\maketitle
\bibliographystyle{plain}
\baselineskip=16pt

 \begin{abstract}
 This paper focuses
on the initial boundary value problem of two-dimensional non-resistive MHD equations in a half space. We prove that the MHD equations have a unique global strong solution around the equilibrium  state $(0,\bf{e_1})$ for Dirichlet boundary condition of velocity and modified Neumann boundary condition of magnetic.\\

{\small{\it MSC}: 35A01; 35Q30; 76D05}\\

 {\small{\it Key words}: Non-resistive MHD,
  \   Global regularity, \ Asymptotic estimates, \ Half space }
\end{abstract}

\bigskip

\section{Introduction}
In this paper, we consider the initial boundary value problem of 
2D incompressible non-resistive magnetohydrodynamic equations (GMHD)
\begin{equation}\label{MHD}
 \left\{
\begin{array}{l}
u_t + u\cdot \nabla u -\Delta u = -\nabla \pi  + B\cdot\nabla B, ~~x\in \R^2_+, ~t>0,\\
B_t + u\cdot \nabla B  =   B\cdot\nabla u, ~~x\in \R^2_+,~t>0,\\
\nabla\cdot u =0, \quad \nabla\cdot B =0, ~~x\in \R^2_+,~t>0,
 \end{array} \right.
 \end{equation}
where $u=u(x,t)=(u^1(x,t),u^2(x,t))$, $B=B(x,t)=(B^1(x,t),B^2(x,t))$ and $\pi=\pi(x,t)$
 denote  the velocity field,  the magnetic field and the pressure, respectively. In addition, $\R^2_+:=\{x=(x_1,x_2)\in \R^2, x_1\in (-\infty, +\infty), x_2\in (0,+\infty)\}$, thus, the outer normal vector of $\partial \R^2_+$ is ${\bf n}:=(0,-1)$.
 
 It is well-known that MHD equations model the dynamics of the velocity and magnetic fields, which plays
an important role in describing electrically conducting fluids, such as plasmas, liquid metals. When the magnetic diffusion is included, it is well-known that the 2D MHD equations have the global smooth solution\cite{1972DL,1983ST}. Moreover, by following Kato's arguments \cite{1984K} and applying the $L^p$-$L^q$ estimates of Stokes semigroup established in \cite{1987U}, there exists a unique strong solution in half-space $\R^n_+$ when initial data are small (small condition is unnecessary if $n=2$), e.g. \cite{2005HX,2012HH}.
 But without magnetic diffusion, the question of whether smooth solution of the 2D MHD equations develops singularity in finite time has been a long-standing open problem. The local existence result of smooth solutions for non-resistive MHD equations was established in \cite{2006JN,2014FMRR,2017FMRR}. In addition, more recently Ren et al.\cite{2014RWXZ} and Lin et al. \cite{2015LXZ} have established the existence of global-in-time solutions for initial data are small and sufficiently close to certain equilibrium solutions in whole space, respectively. Further, Ren et al. \cite{2016RXZ} have established similar results in a strip domain with non-slip boundary condition and Navier slip boundary condition on the velocity. 
 
 In this paper, we are concerned with the non-slip boundary condition on the velocity $u$, i.e.
   \begin{equation}\label{1.2}
 	u(x,t)|_{x\in\partial \R^2_+}=0,
 \end{equation}
 as well as that the boundary condition for the magnetic $B$, i.e.
 \begin{equation}\label{1.3}
 	\nabla^T B(x,t)\cdot {\bf n}|_{x\in \partial \R^2_+}=0,
 \end{equation}
where $\nabla^T=(-\partial_2,\partial_1)$. More precisely,
we give the detail of the boundary condition for the magnetic $B$. 

In fact, $\nabla^T B(x,t)\cdot {\bf n}=0$ on $\partial \R^2_+$, which deduces that 
 $\partial_1 B=0$ on $\partial \R^2_+$, 
 i.e. $\partial_1B^1=\partial_1B^2=0$ on $\partial \R^2_+$.
 Due to $\nabla\cdot B=0$, $\partial_2B^2=0$. Since $\partial_1B^2=0$ and $B^2(x_1,x_2,t)=0$ as $|x|\rightarrow\infty$, therefore $B^2=0$ on $\partial \R^2_+$. The questions about well-posedness become more subtle and more difficult when physical boundaries are present. However, it seems that this question can be handled when boundary conditions (\ref{1.2}) and (\ref{1.3}) are given.
 
 We will investigate small perturbations of the system (\ref{MHD}) around the equilibrium state $(0,\bf{e}_1)$ in half-space $\R^2_+$. Hence, setting $b=B-\bf{e}_1$ and reformulating (\ref{MHD}) as follows
 \begin{equation}\label{PMHD}
 	\left\{
 	\begin{array}{l}
 		u_t + u\cdot \nabla u -\Delta u -\partial_1b= -\nabla \pi  + b\cdot\nabla b, ~~x\in \R^2_+,t>0 \\
 		b_t + u\cdot \nabla b -\partial_1u =   b\cdot\nabla u, ~~x\in\R^2_+,t>0\\
 		\nabla\cdot u = \nabla\cdot b =0, ~~x\in \R^2_+,t>0 \\
 		u=0,~~\nabla^T b\cdot {\bf n}=0, ~~x\in \partial\R^2_+,t>0\\
 		u(x,0)=u_0(x), ~~b(x,0)=b_0(x),~~x\in \R^2_+.
 	\end{array} \right.
 \end{equation}

There are two main difficulties in the existence of strong solutions. The first one arises from the partial dissipation of magnetic $b$. In order to control the nonlinearities, it needs the dissipation of velocity $u$ and magnetic $b$. It is known that the dissipation of $u$ is easy to obtained, whereas the dissipation of $b$ is more subtle without the magnetic diffusion. Fortunately, this difficulty can be overcome by a delicate potential estimates. Motivated by\cite{2016RXZ,2016Z}, through replacing $\Delta\partial_1 u$ by $\partial_1\partial_t u-\partial_1^2b+\partial_1\nabla \pi-\partial_1(b\cdot\nabla b-u\cdot\nabla u) $, inner product with $\mathcal{A}b$, combining with that ${\bf P}$ commutates with $\partial_1$ in half-space,  it follows that the dissipation for $b$ is of form $\|\partial_1\nabla b\|_{L^2}$ rather than $\|\nabla^2 b\|_{L^2}$. The other difficulty  results from the nonlinearity in half-space, i.e. ${\bf P}$ cannot communate with $\nabla$. This needs a series of careful energy estimates to deal with that.
 %\cite{2010H} $${\bf P}(u\cdot\nabla u)=u\cdot\nabla u+\sum_{i,j=1}^n\nabla \mathcal{N}\partial_i\partial_j(u_iu_j) $$ 

As for the long time behaviour of solutions, we are only able to obtain the time asymptotic estimates without decay rates. Without the magnetic diffusion and boundary condition present, it leads to some trouble in dealing with decay rates, which will be left to our future's work.  

The rest of this paper is organized as follows. In section 2, we give some basic interpolation inequalities, properties about the Helmholtz projection ${\bf P}$ and our main results.  The global well-posedness of solutions is established in section 3. Finally, we prove the time asmyptotic estimates in section 4.

\vspace{6mm}

\setcounter{equation}{0}
\section{ Preliminaries and Main Result}

\begin{Lemma}
	(1-1)\cite{2011B,2014P} Let $f\in H^1(\R^2_+)$. Then, for any $2<p<\infty$ and there is $C=C(p)$,
	\begin{equation}\label{2.1}
			\|f\|_{L^p(\R^2_+)}\leq C\|u\|_{L^2(\R^2_+)}^{\frac2p}\|\nabla f\|_{L^2(\R^2_+)}^{\frac{p-2}{p}}.
	\end{equation}
(1-2)\cite{2011B} For $m\geq 1$ is an integer and $p\in [1,+\infty)$, we have $$W^{m,p}(\R^n_+)\subset L^\infty(\R^n_+), ~~\frac1p-
\frac mn<0,$$
and
 $$W^{m,p}(\R^n_+)\subset L^q(\R^n_+), ~\forall q\in [p,+\infty), ~\frac{1}{p}-\frac{m}{n}=0.$$
(2)\cite{1951H}
For any $f=(f^1,f^2)\in H^2(\R^2_+)$ with $\nabla\cdot f=0$ in $\R^2_+$ and $f^2=0$ on $\partial \R^2_+$. There exists a positive constant C such that
\begin{equation}\label{2.2}
  \|f^2\|_{L^\infty}\leq C\|\partial_1 f\|_{H^1};
\end{equation}
\begin{equation}\label{2.3}
	\|f\|_{L^\infty}\leq C\|f\|_{H^1}^{\frac12}\|\partial_1f\|_{H^1}^{\frac12};
\end{equation}
\begin{equation}\label{2.4}
	\|f\|_{W^{1,4}}\leq C\|f\|_{H^2}^{\frac34}\|\partial_1f\|_{H^1}^{\frac14}.
\end{equation}
\end{Lemma}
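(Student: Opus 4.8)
The plan is to handle the three groups of estimates separately, reducing the ``flat'' ones to their Euclidean counterparts and extracting the rest from the two structural hypotheses in part~(2). For (2.1) I would extend $f\in H^1(\R^2_+)$ to $\R^2$ by even reflection, $\tilde f(x_1,x_2):=f(x_1,|x_2|)$; this preserves membership in $H^1$ and gives $\|\tilde f\|_{L^2(\R^2)}^2=2\|f\|_{L^2(\R^2_+)}^2$ and $\|\nabla\tilde f\|_{L^2(\R^2)}^2=2\|\nabla f\|_{L^2(\R^2_+)}^2$, so that the classical two-dimensional Gagliardo--Nirenberg inequality $\|g\|_{L^p(\R^2)}\le C\|g\|_{L^2}^{2/p}\|\nabla g\|_{L^2}^{(p-2)/p}$ applied to $\tilde f$ and restricted to $\R^2_+$ yields (2.1) with $C=C(p)$. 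For (1-2) I would use a bounded (Stein) extension operator $W^{m,p}(\R^n_+)\to W^{m,p}(\R^n)$ together with the classical Sobolev embeddings on $\R^n$: the subcritical case $\frac1p-\frac mn<0$ gives $W^{m,p}(\R^n)\hookrightarrow L^\infty(\R^n)$, while the borderline case $\frac1p-\frac mn=0$ gives $W^{m,p}(\R^n)\hookrightarrow L^q(\R^n)$ for every $q\in[p,\infty)$. Both items I regard as standard and would simply quote.

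The heart of the lemma is part~(2), where the two hypotheses $\nabla\cdot f=0$ and $f^2|_{x_2=0}=0$ yield $\partial_2 f^2=-\partial_1 f^1$ (hence $f^2(x_1,x_2)=-\int_0^{x_2}\partial_1 f^1(x_1,s)\,\d s$), and also $\partial_1 f^2|_{x_2=0}=0$ since it is a tangential derivative of a function vanishing there. For (2.2) I would run the one-dimensional ``Agmon trick'' in each direction: setting $\alpha(x_2)=\|f^2(\cdot,x_2)\|_{L^2_{x_1}}$ and $\beta(x_2)=\|\partial_1 f^2(\cdot,x_2)\|_{L^2_{x_1}}$, integration of $\partial_1|f^2|^2$ in $x_1$ gives $|f^2(x_1,x_2)|^2\le 2\alpha(x_2)\beta(x_2)$, hence $\|f^2\|_{L^\infty}^2\le 2(\sup_{x_2}\alpha)(\sup_{x_2}\beta)$. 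Since $\beta(0)=0$, integrating $\partial_2\beta^2$ from the boundary gives $\sup_{x_2}\beta^2\le 2\|\partial_1 f^2\|_{L^2}\|\nabla\partial_1 f^2\|_{L^2}\le 2\|\partial_1 f\|_{H^1}^2$; and since $\alpha(0)=0$ and $\partial_2 f^2=-\partial_1 f^1$, a parallel integration, after an integration by parts in $x_1$ that exploits $f^2|_{x_2=0}=0$, controls $\sup_{x_2}\alpha^2$ by the same anisotropic quantity (this is precisely where the divergence-free relation and the $H^2$ regularity are used to trade the missing $x_2$-derivatives of $f^2$ for $x_1$-derivatives). Combining the two halves gives $\|f^2\|_{L^\infty}\le C\|\partial_1 f\|_{H^1}$.

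Estimates (2.3) and (2.4) then follow with less effort. For (2.3) I would argue componentwise by reflection: the even extension of $f^1$ and the odd extension of $f^2$ both lie in $H^1(\R^2)$, as do their $\partial_1$-derivatives (using $\partial_1 f^2|_{x_2=0}=0$ for the odd one), so the Euclidean anisotropic bound $\|g\|_{L^\infty(\R^2)}^2\le C\|g\|_{H^1}\|\partial_1 g\|_{H^1}$ (proved by integrating $\partial_1\partial_2(g^2)$ and using Cauchy--Schwarz) transfers to $\R^2_+$ and, summed over the two components, gives $\|f\|_{L^\infty}\le C\|f\|_{H^1}^{1/2}\|\partial_1 f\|_{H^1}^{1/2}$. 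Finally, from $\|g\|_{L^4(\R^2_+)}^2\le\|g\|_{L^2}\|g\|_{L^\infty}$, the Gagliardo--Nirenberg-type inequality $\|\nabla g\|_{L^4(\R^2_+)}^2\le C\|g\|_{L^\infty}\|g\|_{H^2}$ (again via a bounded extension and the Euclidean estimate), and (2.3), one gets $\|f\|_{W^{1,4}}^2\le C\|f\|_{H^2}\,\|f\|_{H^1}^{1/2}\|\partial_1 f\|_{H^1}^{1/2}\le C\|f\|_{H^2}^{3/2}\|\partial_1 f\|_{H^1}^{1/2}$, which is (2.4). The step I expect to be the genuine obstacle is closing (2.2): the estimate for $\sup_{x_2}\alpha$ must be arranged so that the integration by parts in $x_1$ does not leave an uncontrolled $\|f^1\|_{L^2}$ or $\|f^2\|_{L^2}$ behind, and it is this bookkeeping — forced precisely by the boundary condition and the divergence constraint — rather than any single inequality, that is delicate.
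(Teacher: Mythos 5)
The paper offers no proof of this lemma to compare against: (2.1) and (1-2) are quoted from \cite{2011B,2014P}, and part (2) is quoted from \cite{1951H}. Your treatment of the standard items is fine: even reflection plus the planar Gagliardo--Nirenberg inequality gives (2.1), a bounded extension plus the Euclidean embeddings gives (1-2), and your derivations of (2.3) and (2.4) — odd/even reflection at the $H^1$ level (using $\partial_1 f^2|_{x_2=0}=0$), the Euclidean bound $\|g\|_{L^\infty}^2\le C\|g\|_{H^1}\|\partial_1 g\|_{H^1}$, and then $\|g\|_{L^4}^2\le\|g\|_{L^2}\|g\|_{L^\infty}$, $\|\nabla g\|_{L^4}^2\le C\|g\|_{L^\infty}\|\nabla^2 g\|_{L^2}$ — are correct.

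The step you yourself flag in (2.2) is, however, a genuine gap and not mere bookkeeping. Your bound $\sup_{x_2}\beta^2\le 2\|\partial_1 f\|_{H^1}^2$ is fine, but for $\alpha(x_2)=\|f^2(\cdot,x_2)\|_{L^2_{x_1}}$ every version of the proposed manipulation, e.g. $\alpha(x_2)^2=2\int_0^{x_2}\!\!\int_{\R} f^2\partial_2 f^2\,\mathrm{d}y_1\mathrm{d}s=-2\int_0^{x_2}\!\!\int_{\R} f^2\partial_1 f^1\,\mathrm{d}y_1\mathrm{d}s=2\int_0^{x_2}\!\!\int_{\R}\partial_1 f^2\, f^1\,\mathrm{d}y_1\mathrm{d}s$, leaves a factor $\|f^1\|_{L^2}$ (or $\|f^2\|_{L^2}$) that is not dominated by $\|\partial_1 f\|_{H^1}$. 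In the strip-domain setting (as in \cite{2016RXZ}) this is exactly the point where one invokes Poincar\'e in the bounded vertical direction, $\|f^2\|_{L^2}\le C\|\partial_2 f^2\|_{L^2}=C\|\partial_1 f^1\|_{L^2}$, which is unavailable on $\R^2_+$. The obstruction is structural: setting $h=f^2$, the divergence constraint gives $\partial_1 f^1=-\partial_2 h$ and $\partial_1 f^2=\partial_1 h$, so $\|\partial_1 f\|_{H^1}\simeq\|\nabla h\|_{L^2}+\|\nabla^2 h\|_{L^2}$ and (2.2) is the homogeneous embedding $\|h\|_{L^\infty}\lesssim\|\nabla h\|_{L^2}+\|\nabla^2 h\|_{L^2}$. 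This fails on $\R^2$ (take $\hat h(\xi)\simeq|\xi|^{-2}\log^{-3/4}(1/|\xi|)$ on $\delta<|\xi|<1/2$: the right side stays bounded while $h(0)\to\infty$ as $\delta\to0$), and the Dirichlet condition on $\{x_2=0\}$ cannot rescue it, since such a configuration can be realized by a divergence-free field built from a stream function and cut off arbitrarily far from the boundary. So your plan cannot close (2.2) on the half-space as stated; it proves the strip version, or a weaker half-space version carrying an extra factor of $\|f\|_{H^1}$ (i.e. essentially (2.3)). Since the paper only cites \cite{1951H} here, there is no argument in the text from which to patch this step.
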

Denote by $\mathbf{P}$ the Helmholtz projection and define $\mathcal{A} :={\bf P}(-\Delta)$. Then, the following results hold true. 
\begin{Lemma}\cite{1988BM}
		For any $1<p<\infty$ and $n\geq2$,\\
~~	(1) $\|\mathbf{P}f\|_{L^p(\R^n_+)}\leq C\|f\|_{L^p(\R^n_+)}$.\\
~~(2)	$\|\nabla^2 f(t)\|_{L^p(\R^n_+)}\leq C\|\mathcal{A} f(t)\|_{L^p(\R^n_+)}$.
\end{Lemma}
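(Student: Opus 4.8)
Both assertions are classical facts about the Helmholtz--Leray projection and the Stokes operator on a half-space, and the plan is to reduce each to a Calder\'on--Zygmund estimate by exploiting the reflection structure of $\R^n_+$. Throughout, write $f=(f',f^n)$ with $f'$ the tangential part and $f^n$ the normal component (the inequality in (2) is understood pointwise in $t$, so the time variable plays no role), and recall that $\mathbf{P}f$ is characterised by $\mathbf{P}f=f-\nabla\phi$, where $\phi$ solves the weak Neumann problem
\begin{equation*}
\int_{\R^n_+}\nabla\phi\cdot\nabla\psi\,\d x=\int_{\R^n_+}f\cdot\nabla\psi\,\d x\qquad\text{for all suitable }\psi .
\end{equation*}

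For part (1), the first step is to produce the $L^p$ estimate $\|\nabla\phi\|_{L^p(\R^n_+)}\le C\|f\|_{L^p(\R^n_+)}$. I would do this by extending $f$ to all of $\R^n$: reflect the tangential components $f'$ evenly and the normal component $f^n$ oddly across $\{x_n=0\}$, obtaining $\tilde f\in L^p(\R^n)$ with $\|\tilde f\|_{L^p(\R^n)}\le 2^{1/p}\|f\|_{L^p(\R^n_+)}$. On the whole space $\mathbf{P}_{\R^n}$ has components that are linear combinations of the identity and the double Riesz transforms $R_iR_j$, hence is bounded on $L^p(\R^n)$ for $1<p<\infty$, $n\ge2$, by Calder\'on--Zygmund theory; one checks that $\nabla\Delta^{-1}\nabla\cdot\tilde f$ again has tangential part even and normal part odd, so by the symmetry of the extension the restriction of $\mathbf{P}_{\R^n}\tilde f$ to $\R^n_+$ coincides with $\mathbf{P}f$ and the restriction of $\nabla\Delta^{-1}\nabla\cdot\tilde f$ coincides with $\nabla\phi$. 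Therefore $\|\mathbf{P}f\|_{L^p(\R^n_+)}\le\|\mathbf{P}_{\R^n}\tilde f\|_{L^p(\R^n)}\le C\|\tilde f\|_{L^p(\R^n)}\le C\|f\|_{L^p(\R^n_+)}$, which is exactly (1), and the bound on $\nabla\phi$ follows by difference.

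For part (2), let $g:=\mathcal{A}f=\mathbf{P}(-\Delta f)$ for $f$ in the domain of $\mathcal{A}$; then there is an associated pressure $\pi$ such that $(f,\pi)$ solves the stationary Stokes system $-\Delta f+\nabla\pi=g$, $\nabla\cdot f=0$ in $\R^n_+$, together with the boundary condition defining $D(\mathcal{A})$. The plan is to invoke the $L^p$ regularity theory for this system on the half-space: since the Stokes system satisfies the Agmon--Douglis--Nirenberg complementing condition, one has
\begin{equation*}
\|\nabla^2 f\|_{L^p(\R^n_+)}+\|\nabla\pi\|_{L^p(\R^n_+)}\le C\|g\|_{L^p(\R^n_+)},
\end{equation*}
the absence of lower-order terms on the right being justified by the scale invariance of $\R^n_+$ (a dilation argument rules out any $\|f\|_{L^p}$ or $\|\pi\|_{L^p}$ contribution). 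In particular $\|\nabla^2 f\|_{L^p(\R^n_+)}\le C\|\mathcal{A}f\|_{L^p(\R^n_+)}$. A self-contained route to the displayed Stokes estimate is to solve the system explicitly after a partial Fourier transform in the tangential variables $x'$, split the solution into a part governed by translation-invariant (Poisson/Riesz-type) kernels and a boundary-layer part carrying decay in $x_n$, and control both pieces in $L^p$ by Mikhlin-- and Calder\'on--Zygmund-type arguments in $x'$ uniformly in $x_n$.

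The main obstacle is precisely the half-space Neumann/Stokes estimate underlying both parts: the relevant solution operators are not pure convolutions because of the normal variable $x_n$, so verifying the Lopatinskii--Shapiro (complementing) condition, or equivalently extracting the decay-in-$x_n$ bounds for the explicit kernels, is where the real work lies; the reflection trick used in part (1) is the device that circumvents exactly this difficulty for the projection, but it does not transfer to part (2), because an odd/even reflection of a Stokes solution is no longer a Stokes solution. For $p=2$ both estimates are elementary --- integration by parts plus Plancherel in $x'$ --- and the general case is obtained by the Calder\'on--Zygmund machinery, together with a duality argument for $p$ near $1$; one should also record that the reflection in part (1) is the one compatible with the boundary conditions actually in force ($f^n$ odd, $f'$ even), with the analogous choice adapted to the modified Neumann condition on $b$ when that realisation of $\mathcal{A}$ is used later in the paper.
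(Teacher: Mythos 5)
Your proposal is correct in outline, but note that the paper itself gives no argument for this lemma at all: it is quoted directly from the reference [1988BM] (and, for the Stokes estimate you use in part (2), the paper later invokes exactly the same kind of result from Galdi, Lemma IV.3.2, when deriving (3.25)). So the comparison is between a citation and your reconstruction of the standard proofs, and your reconstruction is the standard one. For (1), the even/odd reflection does reduce matters to the whole-space projection $I-R\otimes R$ and Calder\'on--Zygmund bounds for the Riesz transforms; the step you should make explicit is the identification of the restriction with $\mathbf{P}f$, which follows by testing the whole-space weak Neumann identity against even extensions of half-space test functions (the integrands are even by your parity bookkeeping) together with uniqueness of the weak Neumann problem in $\dot W^{1,p}(\R^n_+)$ (Simader--Sohr); one should also be slightly careful that $\nabla\cdot\tilde f$ is only meant distributionally, since the odd extension of $f^n$ jumps across $\{x_n=0\}$ unless $f^n$ vanishes there --- the weak formulation absorbs this. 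For (2), your route via the stationary Stokes system, the Agmon--Douglis--Nirenberg/Lopatinskii--Shapiro theory (or the explicit tangential Fourier solution formulas), and the dilation argument eliminating lower-order terms is precisely how the half-space estimate $\|\nabla^2 f\|_{L^p}\leq C\|\mathcal{A}f\|_{L^p}$ is usually obtained; the only caveat is that the statement implicitly fixes a realisation of $\mathcal{A}$, namely the Dirichlet (no-slip) Stokes operator with domain $L^p_\sigma\cap W^{1,p}_0\cap W^{2,p}$ as in the paper's Step 2, and your remark that the reflection trick of part (1) does not carry over to this part, and that the paper's later application of the inequality to $b$ (which obeys a different boundary condition) needs a correspondingly adapted realisation, is an accurate observation rather than a gap in your argument.
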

\begin{Lemma}
	 In $\R^2_+:=\{x_1\in \R, x_2\in (0,+\infty)\}$,\\
~~	(1)
	$	\partial_1{\bf P}={\bf P}\partial_1({\bf P} \partial_2\neq \partial_2{\bf P})$;\\
~~(2) ${\bf P}^2={\bf P}$.
\end{Lemma}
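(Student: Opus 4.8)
The plan is to argue directly from the defining property of the Helmholtz projection on the half-space. Recall that for a vector field $f$ one writes $f=\mathbf{P}f+\nabla\phi$, where $\mathbf{P}f$ is divergence-free with vanishing normal trace and $\phi$ is the scalar potential solving the Neumann problem
\begin{equation*}
\Delta\phi=\nabla\cdot f \ \text{ in }\R^2_+,\qquad \partial_2\phi\big|_{x_2=0}=f^2\big|_{x_2=0},
\end{equation*}
the boundary condition being exactly $\partial\phi/\partial\mathbf{n}=f\cdot\mathbf{n}$ with $\mathbf{n}=(0,-1)$. This problem determines $\nabla\phi$ uniquely (the solution being unique up to an additive constant in the relevant homogeneous Sobolev space), so $\mathbf{P}f=f-\nabla\phi$ is well defined. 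I would first record this uniqueness statement, then prove the two identities for smooth, suitably decaying $f$ and extend to the general case by density together with the $L^p$-boundedness of $\mathbf{P}$ from Lemma 2.2.

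For part (2), set $g:=\mathbf{P}f$. By construction $\nabla\cdot g=0$ in $\R^2_+$ and $g\cdot\mathbf{n}=g^2=0$ on $\partial\R^2_+$, so the potential associated with $g$ solves $\Delta\psi=0$ with $\partial_2\psi|_{x_2=0}=0$; by uniqueness $\nabla\psi\equiv0$, hence $\mathbf{P}g=g$, i.e. $\mathbf{P}^2=\mathbf{P}$.

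For part (1), let $\phi$ be the potential of $f$ and consider $\partial_1\phi$. Since $\partial_1$ commutes with $\Delta$ we get $\Delta(\partial_1\phi)=\partial_1(\nabla\cdot f)=\nabla\cdot(\partial_1 f)$. The crucial point is that $\partial_1$ is tangential to $\partial\R^2_+$, so it commutes with the trace on $\{x_2=0\}$, giving
\begin{equation*}
\partial_2(\partial_1\phi)\big|_{x_2=0}=\partial_1\big(\partial_2\phi\big|_{x_2=0}\big)=\partial_1\big(f^2\big|_{x_2=0}\big)=(\partial_1 f)^2\big|_{x_2=0}.
\end{equation*}
Hence $\partial_1\phi$ is, up to a constant, the potential of $\partial_1 f$, and therefore $\mathbf{P}(\partial_1 f)=\partial_1 f-\nabla(\partial_1\phi)=\partial_1(f-\nabla\phi)=\partial_1\mathbf{P}f$. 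The same computation fails for $\partial_2$: it is the normal derivative and does not commute with the trace, and from the equation the natural candidate boundary datum $\partial_2^2\phi|_{x_2=0}=(\nabla\cdot f-\partial_1^2\phi)|_{x_2=0}$ is not the normal component of $\partial_2 f$; equivalently, after a Fourier transform in $x_1$ only, $\partial_1$ becomes the scalar multiplier $\mathrm{i}\xi$, which trivially commutes with the $\xi$-dependent operator that $\mathbf{P}$ induces on the $x_2$ variable, whereas $\partial_2=\partial_{x_2}$ is itself an operator in that variable and does not commute with it.

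The genuinely delicate point is not the formal algebra but the functional-analytic bookkeeping: choosing the space in which the Neumann problem has a unique solution modulo constants, checking that $\partial_1 f$ belongs to the domain of $\mathbf{P}$ whenever $f$ does so that $\mathbf{P}\partial_1 f$ is meaningful, and running the density argument that promotes the identities from smooth $f$ to the general setting in which Lemma 2.2 is stated.
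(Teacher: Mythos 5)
Your argument is correct: the paper states this lemma without giving a proof, and your route through the Neumann-problem characterization of the Helmholtz decomposition is the standard one — $\partial_1$ is tangential, so it commutes with both $\Delta\phi=\nabla\cdot f$ and the trace $\partial_2\phi|_{x_2=0}=f^2|_{x_2=0}$, which yields $\partial_1\mathbf{P}=\mathbf{P}\partial_1$, while the projection property follows from uniqueness (modulo constants) of the Neumann potential. The remaining bookkeeping you flag (uniqueness in the homogeneous Sobolev setting, density, and $L^p$-boundedness of $\mathbf{P}$) is exactly what the cited Borchers--Miyakawa theory supplies, so no genuine gap remains.
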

\begin{Lemma}\cite{2011H,2012H}
	Let $1\leq k,m\leq n$, then for all $u\in C_{0,\sigma}^\infty(\R^n_+)$
	\begin{equation*}
		\|\sum_{i,j=1}^n\partial_k\partial_m\mathcal{N}\partial_i\partial_j(u^iu^j)\|_{L^q(\R^n_+)}\leq C(\|u\|_{L^{2q}(\R^n_+)}^2+\|\nabla u\|_{L^{2q_1}(\R^n_+)}\|\nabla^2u\|_{L^{2q_2}(\R^n_+)}),
	\end{equation*}
for any $1\leq q\leq \infty$ and $\frac{1}{q_1}+\frac{1}{q_2}=\frac{1}{q}$, $1\leq q_1,q_2\leq +\infty$.
\end{Lemma}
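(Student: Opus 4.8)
\medskip
\noindent\textbf{Proof proposal.} The plan is to realise $\mathcal{N}$ as the solution operator of the Neumann problem for $-\Delta$ on $\R^n_+$, so that $\mathcal{N}g(x)=\int_{\R^n_+}N(x,y)g(y)\,dy$ with Green's function $N(x,y)=E(x-y)+E(x-y^{*})$, where $E$ is the fundamental solution of $-\Delta$ on $\R^n$ and $y^{*}=(y_1,\dots,y_{n-1},-y_n)$; then $\sum_{i,j}\partial_k\partial_m\mathcal{N}\partial_i\partial_j(u^iu^j)$ is, up to sign, the Hessian of the pressure in the Helmholtz decomposition of $u\cdot\nabla u$. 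Since $u\in C^\infty_{0,\sigma}(\R^n_+)$ has compact support in the \emph{open} half-space, every integration by parts performed below leaves no boundary term on $\partial\R^n_+$. The first thing I would do is record three equivalent forms of the source obtained from $\nabla\cdot u=0$: the raw double divergence $\sum_{i,j}\partial_i\partial_j(u^iu^j)$, the single divergence $\sum_i\partial_i\big((u\cdot\nabla)u^i\big)$, and the pointwise quadratic identity $\sum_{i,j}\partial_i\partial_j(u^iu^j)=\sum_{i,j}(\partial_i u^j)(\partial_j u^i)$ (in which all terms containing $u\cdot\nabla^2 u$ or a factor $\nabla\cdot u$ cancel). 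The last identity is crucial because it downgrades an apparently second-order source to a product of first derivatives; the other two forms supply derivatives that can be shifted onto the kernel.

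For $1<q<\infty$ the bound should follow softly. The kernel $\partial_{x_k}\partial_{x_m}N(x,y)$ satisfies $|\partial_{x_k}\partial_{x_m}N(x,y)|\lesssim|x-y|^{-n}$ together with the H\"ormander smoothness estimates, and its reflected part is never worse than the free one because $|x-y^{*}|\ge|x-y|$ on $\R^n_+$; hence $\partial_k\partial_m\mathcal{N}$ is a Calder\'on--Zygmund operator, bounded on $L^q(\R^n_+)$. Applying it to the quadratic form of the source and using H\"older's inequality gives a bound by $\|\nabla u\|_{L^{r_1}}\|\nabla u\|_{L^{r_2}}$ for any H\"older pair $(r_1,r_2)$, and one interpolation step (Lemma~2.1) turns this into the stated right-hand side.

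The heart of the lemma is the endpoint range $q\in\{1,\infty\}$, where Calder\'on--Zygmund boundedness is not available; there I would split the kernel at unit scale and use in each region the form of the source adapted to it. On the far region $|x-y|\ge1$ I would take the double-divergence form and integrate by parts twice, moving all of $\partial_i\partial_j$ onto the kernel: the resulting kernel decays like $|x-y|^{-n-2}$ (the reflected piece again being better), hence lies in $L^1(\{|x-y|\ge1\})$, and a convolution (Young/H\"older) estimate against $u^iu^j$ produces the term $\|u\|_{L^{2q}}^2$, while the boundary integrals generated on $\{|x-y|=1\}$ carry bounded kernels and at most one derivative of $u\otimes u$ and are reabsorbed by interpolation. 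On the near region $|x-y|<1$ I would use the quadratic form $h:=\sum_{i,j}(\partial_i u^j)(\partial_j u^i)$ and write $h(y)=h(x)+\big(h(y)-h(x)\big)$. The frozen term $h(x)\int_{|x-y|<1}\partial_{x_k}\partial_{x_m}N(x,y)\,dy$ is controlled because the free part of the integral vanishes in the principal-value sense (the spherical mean of $\partial_k\partial_m E$ is zero), the reflected part being treated likewise using the Neumann condition $\partial_n N|_{x_n=0}=0$; this leaves only $\lesssim\|\nabla u\|_{L^\infty}^2$. For the increment, the product structure gives the Lipschitz bound $|h(y)-h(x)|\lesssim\|\nabla u\|_{L^\infty}\|\nabla^2 u\|_{L^\infty}\,|x-y|$, which lowers the kernel singularity to the integrable order $|x-y|^{-n+1}$ and yields the term $\|\nabla u\|_{L^{2q_1}}\|\nabla^2 u\|_{L^{2q_2}}$. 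Collecting everything and absorbing the leftover $\|\nabla u\|^2$-type contributions via Gagliardo--Nirenberg (Lemma~2.1) gives the inequality for all $1\le q\le\infty$.

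I expect the near-diagonal analysis, especially at $q=\infty$, to be the main obstacle. There one has neither a Calder\'on--Zygmund gain nor any room to place a derivative on $u$ (the right-hand side tolerates only $\nabla^2 u$), so the estimate must simultaneously exploit a cancellation in the kernel and a Lipschitz gain from the precise divergence-free algebra of the nonlinearity. Making the reflected part of the Green's function cooperate near the boundary --- where $|x-y^{*}|$ may also become small --- is the most delicate point, and it is exactly here that the Neumann boundary condition of $N$ enters.
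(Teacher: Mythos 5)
The paper does not prove this statement at all: Lemma~2.4 is imported verbatim from the cited references \cite{2011H,2012H} and is used as a black box (only in the estimate of $H_{21}$, with $q=2$, $q_1=q_2=4$). So there is no proof in the paper to compare yours against; what can be judged is whether your sketch would actually establish the inequality. Its skeleton --- the Neumann function $N(x,y)=E(x-y)+E(x-y^{*})$, absence of boundary terms for $u\in C^\infty_{0,\sigma}(\R^n_+)$, a far/near splitting of the kernel, the far region integrated by parts twice to produce $\|u\|_{L^{2q}}^2$, and the divergence-free identity $\sum\partial_i\partial_j(u^iu^j)=\sum(\partial_iu^j)(\partial_ju^i)$ to trade the second-order source for $\nabla u\otimes\nabla u$ near the diagonal --- is indeed the natural potential-theoretic route and very likely the spirit of the cited proofs.

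However, as written the argument has concrete gaps. First, the exponent bookkeeping does not close for finite $q$: with $\frac1{q_1}+\frac1{q_2}=\frac1q$, H\"older places $|\nabla u|\,|\nabla^2u|$ in $L^{2q}$ (since $\frac1{2q_1}+\frac1{2q_2}=\frac1{2q}$), not in $L^q$, and convolving with a compactly supported integrable kernel maps $L^{2q}$ to $L^{2q}$; so the step ``locally integrable kernel of order $|x-y|^{-n+1}$ applied to $\nabla u\,\nabla^2u$ yields the stated term'' produces the wrong Lebesgue index, and some additional mechanism (a kernel bound in a suitable $L^r$ with Young's inequality, or a different distribution of derivatives) is needed; your freezing/Lipschitz argument, which uses $\|\nabla u\|_{L^\infty}$ and $\|\nabla^2u\|_{L^\infty}$ pointwise, really only addresses $q=q_1=q_2=\infty$ and says nothing about $q=1$. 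Second, the cancellation you invoke for the frozen term relies on vanishing spherical means of $\partial_k\partial_mE$ over full balls, but near $\partial\R^n_+$ the region $\{|x-y|<1\}\cap\R^n_+$ is truncated and the reflected kernel $E(x-y^{*})$ becomes nearly singular (its singularity sits at $y=x^{*}$, just outside the domain), with a sign change when $k$ or $m$ is the normal direction; you flag this as the delicate point but give no mechanism, and the bound $|x-y^{*}|\ge|x-y|$ controls size, not the cancellation you need. Finally, the surface terms on $\{|x-y|=1\}$ created by the far-field integration by parts carry $u\otimes\nabla u$, a quantity that does not appear on the right-hand side of the lemma; ``reabsorbing by interpolation'' via Lemma~2.1 would introduce norms the statement does not allow, so these terms need a genuine argument (e.g.\ a further integration by parts on the sphere) rather than a remark. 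Until these three points are handled, the proposal is a plausible plan but not a proof of the stated inequality in the full range $1\le q\le\infty$.
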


 Our main result is the following:
\begin{Theorem}
	Let $(u_0,b_0)\in H^2(\R^2_+)$ satisfy $\nabla\cdot u_0=\nabla\cdot b_0=0$ in $\R^2_+$. If the initial data are small and $b^1(0,\cdot)|_{\partial\R^2_+}=0$,
	% and $$\|(u_0,b_0)\|_{H^2}^2+\|u_t(0)\|_{L^2}^2\leq \epsilon^2$$
%	for some suitable small $\epsilon>0$, 
	then the MHD system (\ref{PMHD}) admits a unique global solution $(u,b)\in C([0,+\infty);H^2(\R^2_+)$ and there exists some constant $C_0>0$ such that 
	\begin{eqnarray*}
		&&\|(u,b)\|_{H^2}^2+\|u_t\|_{L^2}^2+\|\nabla \pi\|_{L^2}^2+\int_0^t\|\nabla u\|_{H^2}^2+\|(\partial_1b,u_t)\|_{H^1}^2+\|b_t\|_{L^2}^2ds\\
		&\leq& C_0(\|(u_0,b_0)\|_{H^2}^2+\|u_t(0)\|_{L^2}^2)
	\end{eqnarray*}
for all $t>0$. In addition, for $2<p<+\infty$
\begin{equation*}
	\|(u,b)(\cdot,t)\|_{L^p}\rightarrow 0, ~as~t\rightarrow+\infty.
\end{equation*}
\end{Theorem}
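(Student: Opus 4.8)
The plan is to prove the theorem via the standard continuity argument: set up a local solution, establish an \emph{a priori} estimate of the form $E(t) \le C E(0) + C(E(t))^{3/2}$ on some maximal interval where $E(t)$ stays small, and conclude $E(t) \le 2C E(0)$ for all $t$ so the solution extends globally. Here $E(t)$ should be the quantity in the statement, namely
\begin{equation*}
E(t) := \|(u,b)(t)\|_{H^2}^2 + \|u_t(t)\|_{L^2}^2 + \|\nabla\pi(t)\|_{L^2}^2 + \int_0^t \big(\|\nabla u\|_{H^2}^2 + \|(\partial_1 b,u_t)\|_{H^1}^2 + \|b_t\|_{L^2}^2\big)\,ds.
\end{equation*}
Local existence in $H^2$ follows from Kato-type arguments using the Stokes semigroup ($L^p$-$L^q$ estimates), exactly as in the resistive case cited in the introduction, treating $\partial_1 b$ and the nonlinear terms as forcing; the only new point is propagating the modified Neumann condition $\nabla^T b\cdot{\bf n}=0$ (equivalently $\partial_1 b=0$ on $\partial\R^2_+$ and $b^2=0$ there), which is preserved by the transport structure of the $b$-equation together with the hypothesis $b^1(0,\cdot)|_{\partial\R^2_+}=0$.

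The core is the \emph{a priori} estimate, carried out in several layers. First, the basic $L^2$ energy: multiply the $u$-equation by $u$ and the $b$-equation by $b$, integrate, use $\nabla\cdot u=\nabla\cdot b=0$ and the boundary conditions (the pressure term and $u\cdot\nabla$ terms drop, the coupling terms $-\partial_1 b\cdot u$ and $-\partial_1 u\cdot b$ cancel after integration by parts since $u=0$ on the boundary), to get $\frac{d}{dt}\|(u,b)\|_{L^2}^2 + \|\nabla u\|_{L^2}^2 \lesssim 0$. Next, the $H^1$ and $H^2$ energies: differentiate the equations in $x$ (one derivative, then two), test against the corresponding derivative of $u$ and $b$; the commutator $[{\bf P},\nabla]$ obstruction means one cannot simply write $\nabla^2 u = \nabla^2 {\bf P} u$, so here I would lean on Lemma 2.5 (the Bogovskii/potential estimate for $\partial_k\partial_m\mathcal{N}\partial_i\partial_j(u^iu^j)$) together with Lemma 2.2(2) ($\|\nabla^2 f\|_{L^p}\le C\|\mathcal{A}f\|_{L^p}$) to recover second-order control of $u$ from $\mathcal{A}u = -u_t - {\bf P}(u\cdot\nabla u) + {\bf P}\partial_1 b + {\bf P}(b\cdot\nabla b)$. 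All nonlinear terms are then estimated in $L^2$ by the anisotropic inequalities (2.1)--(2.4), which crucially only cost $\|\partial_1 f\|_{H^1}$ rather than full $\|\nabla^2 f\|_{L^2}$ — this is what makes the $b$-dissipation, available only in the form $\|\partial_1\nabla b\|_{L^2}$, sufficient to close.

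The decisive step — and the main obstacle — is producing the dissipation for $b$. Since the $b$-equation has no Laplacian, $\|\nabla b\|_{L^2}$ does not decay on its own; one recovers it only in the weak anisotropic form $\|\partial_1 b\|_{H^1}$. Following the strategy flagged in the introduction (after \cite{2016RXZ,2016Z}): take $\partial_1$ of the $u$-equation, solve for $\Delta\partial_1 u = \partial_1 u_t - \partial_1^2 b + \partial_1\nabla\pi - \partial_1(b\cdot\nabla b - u\cdot\nabla u)$, and pair this identity with $\mathcal{A}b$ (using that ${\bf P}$ commutes with $\partial_1$ in the half-space, Lemma 2.3). The term $-\langle \partial_1^2 b,\mathcal{A}b\rangle$ produces, modulo boundary terms that vanish by (1.3), the coercive quantity $\|\partial_1\nabla b\|_{L^2}^2$; the remaining terms are either time derivatives that can be absorbed into $\frac{d}{dt}(\text{energy})$ after an integration by parts in $t$ (giving rise to the $\|u_t\|_{H^1}$ and $\|\nabla\pi\|_{L^2}$ terms in $E(t)$), or nonlinearities bounded by (2.1)--(2.4) times a small factor. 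Carefully tracking the boundary contributions from $x_2=0$ here is where the hypotheses $u|_{\partial\R^2_+}=0$, $b^2|_{\partial\R^2_+}=0$ and $\partial_1 b|_{\partial\R^2_+}=0$ are all used; this bookkeeping, plus the pressure estimate $\|\nabla\pi\|_{L^2}\lesssim \|u\|_{H^1}\|\nabla u\|_{H^1}+\|b\|_{H^1}\|\nabla b\|_{H^1}$ from the elliptic equation $\Delta\pi = -\nabla\cdot(u\cdot\nabla u) + \nabla\cdot(b\cdot\nabla b)$ with Neumann data, is the genuinely delicate part. Combining all layers with small coefficients yields the closed differential inequality, hence the global bound. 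Finally, for the asymptotic statement: the uniform bound gives $\int_0^\infty \|\nabla u\|_{L^2}^2\,ds<\infty$ and boundedness of $\frac{d}{dt}\|\nabla u\|_{L^2}^2$, so $\|\nabla u(t)\|_{L^2}\to 0$; a similar argument using the integrability of $\|\partial_1 b\|_{H^1}^2$ and $\|b_t\|_{L^2}^2$ forces $\|\partial_1 b(t)\|_{L^2}\to 0$, hence via (2.3) and (2.1) the full $L^p$ norms of $u$ and of $b$ (the latter through $b^2$ controlled by $\|\partial_1 b\|_{H^1}$, and $b^1$ by interpolation against the bounded $H^2$ norm) tend to zero for $2<p<\infty$.
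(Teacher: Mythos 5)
Your overall architecture does match the paper's: local solution plus continuity argument, layered $L^2$/$H^1$/$H^2$ energies, dissipation of $b$ extracted by substituting $\Delta\partial_1 u$ from the momentum equation and pairing with $\mathcal{A}b$ (using $\partial_1{\bf P}={\bf P}\partial_1$), and the steady Stokes estimate for $\nabla^2u,\nabla\pi$. But there is a genuine gap at the decisive point. Your claim that, once $\|\partial_1\nabla b\|_{L^2}^2$ is extracted, "the remaining terms are either time derivatives \ldots or nonlinearities bounded by (2.1)--(2.4) times a small factor" is false for the terms that are quadratic in pure $x_2$-derivatives of $b^1$ with a single velocity-derivative factor: already at the $H^1$ level the term $\mathcal{K}=\int_{\R^2_+}(\partial_2b^1)^2\partial_1u^1\,dx$, and at the $H^2$ level $\mathcal{T}_1=\langle\partial_2^2b^1,\partial_2\partial_1u^1\,\partial_2b^1\rangle$ and $\mathcal{T}_2=\langle\partial_2^2b^1,\partial_2u^2\,\partial_2^2b^1\rangle$. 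These admit only bounds of the type $\|\nabla u\|_{H^2}\,\|b\|_{H^2}^2$, i.e.\ a single power of a dissipative norm times the (merely bounded, not integrable) energy, so they cannot be absorbed and the bootstrap does not close by anisotropic interpolation alone. The paper's resolution is structural: use $\partial_2u^2=-\partial_1u^1$ and replace $\partial_1u^1$ by $b^1_t+u\cdot\nabla b^1-b\cdot\nabla u^1$ from the $b$-equation, integrate by parts in time (which adds cubic correctors such as $\langle\partial_2^2b^1,(\partial_2b^1)^2\rangle$, $\langle b^1,(\partial_2^2b^1)^2\rangle$, $\langle (b^1)^2,(\partial_2^2b^1)^2\rangle$ to the energy functional), and kill the resulting boundary terms using $b^1|_{\partial\R^2_+}=0$, which is exactly what the hypothesis $b^1(0,\cdot)|_{\partial\R^2_+}=0$ provides via Remark 2.1. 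In your proposal that hypothesis is used only to "propagate the Neumann condition," which misses its actual role, and the trick itself is absent; this is the main missing idea.

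Two further points. First, your asymptotic argument for $b$ needs $\|\partial_1b(t)\|_{H^1}\to0$ in order to invoke (2.2)--(2.3) pointwise in time, but the a priori bounds only give $\int_0^\infty\|\partial_1b\|_{H^1}^2\,dt<\infty$, and controlling $\frac{d}{dt}\|\partial_1b\|_{H^1}^2$ would require third derivatives of $b$ that are not available in $H^2$; the paper instead proves $\int_0^\infty\|b\|_{L^p}^q\,dt<\infty$ and $\int_0^\infty\bigl|\frac{d}{dt}\|b\|_{L^p}^q\bigr|\,dt<\infty$ (applying (2.3) under the time integral together with $b_t\in L^2_tL^2_x$), which gives $\|b\|_{L^p}\to0$ without pointwise decay of $\partial_1b$. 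Second, your pressure bound $\|\nabla\pi\|_{L^2}\lesssim\|u\|_{H^1}\|\nabla u\|_{H^1}+\|b\|_{H^1}\|\nabla b\|_{H^1}$ from the Neumann problem for $\Delta\pi$ is not correct as stated (it drops the $u_t$ and $\partial_1b$ contributions); the paper uses the steady Stokes estimate, giving $\|\nabla^2u\|_{L^2}+\|\nabla\pi\|_{L^2}\lesssim\|u_t\|_{L^2}+\|u\cdot\nabla u\|_{L^2}+\|\partial_1b\|_{L^2}+\|b\cdot\nabla b\|_{L^2}$, which is also what supplies the $\|\nabla\pi\|_{L^2}^2$ and $\|u_t\|_{L^2}^2$ terms appearing in the stated energy.
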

\begin{Remark}
	 The initial data $b^1_0(\cdot)|_{\partial \R^2_+}=0$ is essential in our proof. This because that it can be propagated for any time by the equation $(\ref{PMHD})_2$ if $b^1_0=0$ on $\partial \R^2_+$. Indeed,  since $(u^1,u^2)=0$, $b^2=0$ and $\partial_1u^1=0$ on $\partial\R^2_+$, it holds that
	\begin{equation*}
		b^1_t+\sum u^j\partial_j b^1-\partial_1u^1=b^1\partial_1u^1+b^2\partial_2u^2, ~~x\in \partial \R^2_+,
	\end{equation*}
	\begin{equation*}
		\Rightarrow	b^1_t=0,~~ x\in \partial \R^2_+,
	\end{equation*}
\begin{equation*}
	\Rightarrow	b^1(t,x)=b^1(0,x),~~ x\in \partial \R^2_+,t>0.
\end{equation*}
\end{Remark}

\vspace{6mm}

\setcounter{equation}{0}
\section{ Global well-posedness}
The following local well-posedness can be proved by using the standard energy method. So we omit its proof.
\begin{Theorem}
	Assume that the initial data $(u_0,b_0)$ satisfies $(u_0,b_0)\in H^2 $. Then there exists $T>0$ such that the MHD system (\ref{PMHD}) has a unique solution $(u,b)$
	on $[0,T]$ satisfying 
	$$ (u,b)\in C([0,T]; H^2(\R^2_+)).$$
\end{Theorem}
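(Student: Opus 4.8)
The plan is to build the solution by a linearized (Picard) iteration together with uniform $H^2$ energy estimates, and then to extract uniqueness and time-continuity from the same estimates; this is what the phrase ``standard energy method'' refers to. The organizing idea is that on a short time interval one need not recover any regularity for $b$: since $b$ carries no dissipation, I treat the induction equation $(\ref{PMHD})_2$ as a \emph{linear transport equation with source}, and absorb the derivative loss in the coupling terms $\partial_1 b$, $\partial_1 u$ and $b\cdot\nabla u$ using only the parabolic smoothing of $u$ and Gr\"onwall's inequality.

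I would first set up the iteration. Fix a radius $M>0$ and a time $T>0$ to be chosen. Given $(u^k,b^k)$ with $\sup_{[0,T]}\|(u^k,b^k)\|_{H^2}\le M$, define $(u^{k+1},b^{k+1})$ as the solution of the \emph{decoupled linear problems}: for the velocity, apply $\mathbf P$ to the momentum equation to remove the pressure and solve the linear Stokes/Oseen system
\[
\partial_t u^{k+1}+\mathcal A u^{k+1}=\partial_1 b^{k}+\mathbf P\big(b^{k}\cdot\nabla b^{k}-u^{k}\cdot\nabla u^{k}\big),\qquad u^{k+1}|_{\partial\R^2_+}=0,
\]
where I have used Lemma 2.3 in the form $\mathbf P\,\partial_1 b^{k}=\partial_1\mathbf P b^{k}=\partial_1 b^{k}$ (valid since $b^k$ is solenoidal with $b^{k,2}|_{\partial\R^2_+}=0$, so $\mathbf P b^k=b^k$); for the magnetic field, solve the linear transport problem $\partial_t b^{k+1}+u^{k+1}\cdot\nabla b^{k+1}-b^{k+1}\cdot\nabla u^{k+1}=\partial_1 u^{k+1}$ with $\nabla^T b^{k+1}\cdot\mathbf n=0$. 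Both linear problems are classically solvable. The divergence-free condition is consistent with the scheme, since $\nabla\cdot b^{k+1}$ itself satisfies a homogeneous transport equation (the terms $\nabla\cdot(u\cdot\nabla b)$ and $\nabla\cdot(b\cdot\nabla u)$ cancel after an index relabeling), and the magnetic boundary condition, in particular $b^{1}|_{\partial\R^2_+}=0$, is propagated exactly as in Remark 2.1.

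Then I would derive the uniform bounds. Performing $L^2$, first-order and $\mathcal A$-level (equivalently, $\nabla^2$ via Lemma 2.2) energy estimates on the linear system, the linear coupling $\partial_1 b$/$\partial_1 u$ is skew and cancels in the $L^2$ estimate, while at higher order it is controlled by the velocity dissipation $\|\nabla u^{k+1}\|_{H^2}^2$ and the $H^2$-norm of $b$; every nonlinear product is estimated through the interpolation and Sobolev inequalities of Lemma 2.1, and the pressure (recovered a posteriori through $\mathcal N$) is handled by Lemma 2.4. Crucially I \emph{discard} any would-be magnetic dissipation, so the transport estimate for $b^{k+1}$ loses no derivatives and merely feeds $\|\nabla u^{k+1}\|_{H^2}$ through the source. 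This yields a closed differential inequality of the schematic form $\tfrac{d}{dt}E_{k+1}+\|\nabla u^{k+1}\|_{H^2}^2\le C\,\mathcal P(M)\,(1+E_{k+1})$ for the energy $E_{k+1}=\|(u^{k+1},b^{k+1})\|_{H^2}^2$, so that choosing $M=2\|(u_0,b_0)\|_{H^2}^2$ and $T=T(M)$ small keeps $\sup_{[0,T]}E_{k+1}\le M$; by induction the whole sequence stays in the ball.

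Finally, I would pass to the limit and conclude. Writing the equations for the differences $(u^{k+1}-u^{k},b^{k+1}-b^{k})$ and running the same estimate at the $L^2$ level produces a contraction, $\sup_{[0,T]}\|(\delta u^{k+1},\delta b^{k+1})\|_{L^2}^2\le\theta\sup_{[0,T]}\|(\delta u^{k},\delta b^{k})\|_{L^2}^2$ with $\theta<1$ for $T$ small; hence $(u^k,b^k)$ is Cauchy in $C([0,T];L^2)$, and interpolating against the uniform $H^2$ bound gives strong convergence in every intermediate norm together with weak-$*$ convergence in $L^\infty(0,T;H^2)$, which suffices to take limits in all nonlinear terms and obtain a strong solution. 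Uniqueness follows from the identical $L^2$ difference estimate and Gr\"onwall applied to two solutions. Strong time-continuity $(u,b)\in C([0,T];H^2)$ comes from weak continuity plus continuity of the $H^2$ norm (no jumps), and the pressure is reconstructed in the right class by Lemma 2.4. The main obstacle throughout is that, because the domain is the half-space and $\mathbf P$ does \emph{not} commute with $\partial_2$ (Lemma 2.3), the second-order energy estimates generate boundary integrals on $\partial\R^2_+$; closing the estimate requires these to vanish, which they do thanks to $u|_{\partial\R^2_+}=0$, $b^2|_{\partial\R^2_+}=0$ and $\partial_1 b|_{\partial\R^2_+}=0$, and it requires controlling the commutator $[\nabla^2,u\cdot\nabla]b$ purely by the $H^2$ norms and the velocity dissipation, with no help from any magnetic smoothing.
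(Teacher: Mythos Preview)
The paper gives no proof of this theorem at all: immediately after the statement it says ``The following local well-posedness can be proved by using the standard energy method. So we omit its proof.'' Your sketch is a correct and fully standard realization of exactly that phrase---a Picard/linearization iteration with uniform $H^2$ bounds coming from parabolic Stokes estimates for $u$ and pure transport estimates for $b$, followed by an $L^2$ contraction on differences and interpolation---so there is nothing to compare against and no gap to flag.
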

Now let us introduce the following energy
$$\mathcal{E}(t):=\|(u,b)\|_{H^2}^2+\|(u_t,\nabla \pi)\|_{L^2}^2,$$
and the dissipated energy
$$\mathcal{F}(t):=\int_0^t\|\nabla u\|_{H^2}^2+\|(\partial_1b,u_t)\|_{H^1}^2+\|b_t\|_{L^2}^2d\tau.$$
\begin{Proposition}
	Assume that the initial data of the system (\ref{PMHD}) are small enough, then for some $C>0$ and $0<T\ll 1$, it holds that 
	$$\mathcal{E}(t)+\mathcal{F}(s)\leq C\mathcal{E}(0)$$
	for any $t\in (0,T]$.
\end{Proposition}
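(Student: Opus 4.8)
The plan is to prove the estimate by a continuity (bootstrap) argument on a short interval $[0,T]$ with $T\ll1$, using the smallness of $\mathcal{E}(0)$ to keep the nonlinear terms subordinate. Rather than closing directly on $\mathcal{E}$, I would work with the modified energy $\widetilde{\mathcal{E}}(t):=\mathcal{E}(t)+\lambda_1\langle\partial_1 u,b\rangle+\lambda_2\langle\partial_1 u,\mathcal{A}b\rangle$, in which the weights $0<\lambda_1,\lambda_2\ll1$ are fixed so that $\tfrac12\mathcal{E}\le\widetilde{\mathcal{E}}\le2\mathcal{E}$; this uses only $|\langle\partial_1 u,b\rangle|+|\langle\partial_1 u,\mathcal{A}b\rangle|\lesssim\|u\|_{H^1}\big(\|b\|_{L^2}+\|\nabla^2 b\|_{L^2}\big)\lesssim\mathcal{E}$. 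With the dissipation density $\mathcal{D}:=\|\nabla u\|_{H^2}^2+\|(\partial_1 b,u_t)\|_{H^1}^2+\|b_t\|_{L^2}^2$ (so $\mathcal{F}=\int_0^t\mathcal{D}\,ds$), the goal of the energy computations is a differential inequality $\tfrac{d}{dt}\widetilde{\mathcal{E}}+c\,\mathcal{D}\le C\big(\widetilde{\mathcal{E}}^2+\widetilde{\mathcal{E}}^3\big)$, from which Gronwall and the smallness of $\mathcal{E}(0)$ give $\mathcal{E}(t)+\mathcal{F}(t)\le C_0\mathcal{E}(0)$ on $[0,T]$.

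First I would record the velocity estimates. Testing $(\ref{PMHD})_1$ with $u$ and $(\ref{PMHD})_2$ with $b$ and adding, the linear coupling cancels after an integration by parts in the tangential variable $x_1$ (so no boundary term appears), while the transport term and the Lorentz/stretching pair cancel by $\nabla\cdot u=\nabla\cdot b=0$ together with $u|_{\partial\R^2_+}=0$ and $b^2|_{\partial\R^2_+}=0$; this gives $\tfrac{d}{dt}\|(u,b)\|_{L^2}^2+2\|\nabla u\|_{L^2}^2=0$. Differentiating the system in $t$ and testing with $(u_t,b_t)$ controls $\|u_t\|_{L^2}$ with the dissipation $\|\nabla u_t\|_{L^2}^2$, and $\|b_t\|_{L^2}$ is then read off from $(\ref{PMHD})_2$. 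Viewing $(\ref{PMHD})_1$ as the stationary Stokes system $-\Delta u+\nabla\pi=F$, $\nabla\cdot u=0$, $u|_{\partial\R^2_+}=0$ with $F=-u_t-u\cdot\nabla u+\partial_1 b+b\cdot\nabla b$, Lemma 2.2 yields $\|u\|_{H^2}+\|\nabla\pi\|_{L^2}\lesssim\|F\|_{L^2}$ (modulo lower-order terms already controlled), and its one-derivative-higher analogue places $\|\nabla u\|_{H^2}$ into $\mathcal{D}$, its right-hand side involving $\|\nabla u_t\|_{L^2}$ and $\|\partial_1\nabla b\|_{L^2}$ (both already in $\mathcal{D}$) plus nonlinear terms. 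Since $\mathbf{P}$ does not commute with $\nabla$ on $\R^2_+$, this last point needs care, and the pressure-type contributions it produces I would bound using Lemma 2.4 and the anisotropic inequalities (\ref{2.3})--(\ref{2.4}).

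The heart of the matter is producing the magnetic dissipation $\|\partial_1\nabla b\|_{L^2}^2$. Following the device indicated in the introduction, I would differentiate $(\ref{PMHD})_1$ in $x_1$ and solve for $\Delta\partial_1 u=\partial_1\partial_t u+\partial_1(u\cdot\nabla u)-\partial_1^2 b+\partial_1\nabla\pi-\partial_1(b\cdot\nabla b)$, substitute this into a suitable pairing of the $x_1$-differentiated momentum balance against $\mathcal{A}b$, and invoke $\mathbf{P}\partial_1=\partial_1\mathbf{P}$ on $\R^2_+$ (Lemma 2.3) so that the two pressure contributions drop out. The decisive ingredient is the boundary analysis: by the Remark, $b^1|_{\partial\R^2_+}=0$ is propagated from the data, and with $b^2|_{\partial\R^2_+}=0$ this gives $\partial_1 b|_{\partial\R^2_+}=0$, so that $\langle\partial_1^2 b,\mathcal{A}b\rangle=-\|\partial_1\nabla b\|_{L^2}^2$ with no boundary remainder --- and the pairing must be arranged so that this term is not annihilated by the linear coupling $\partial_1 b$ occurring elsewhere in it. The $\partial_1\partial_t u$ term becomes, after integrating by parts in $x_1$ and $t$, the time-derivative of the cross functional $\langle\partial_1 u,\mathcal{A}b\rangle$ plus a term that via $(\ref{PMHD})_2$ reduces to $\|\nabla\partial_1 u\|_{L^2}^2$ (absorbed into the velocity dissipation) plus nonlinear terms; the rest is nonlinear. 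A parallel, lower-order computation based on $\partial_1 u=b_t+u\cdot\nabla b-b\cdot\nabla u$ and $\tfrac{d}{dt}\langle\partial_1 u,b\rangle$, with the elliptic term integrated by parts so as to land on $\|\nabla u\|_{L^2}\|\partial_1\nabla b\|_{L^2}$, supplies the remaining $\|\partial_1 b\|_{L^2}^2$.

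Finally I would estimate all nonlinear contributions --- products of $u$, $b$ and their derivatives up to third order, and the pressure nonlinearity via Lemma 2.4 --- by distributing derivatives and using (\ref{2.1}), (\ref{2.3}) and (\ref{2.4}), so that after Young's inequality each splits into a small multiple of $\mathcal{D}$ plus a term superlinear in $\widetilde{\mathcal{E}}$, the latter absorbed by Gronwall under the bootstrap assumption. Adding the zeroth-order, time-derivative, Stokes-regularity, magnetic-dissipation and lower-order estimates with the small weights $\lambda_1,\lambda_2$ then produces the target differential inequality, and the continuity argument closes the proof on $[0,T]$. I expect the real obstacle to lie in this last bookkeeping: keeping every boundary term under control while commuting $\mathbf{P}$ past $\partial_1$ but not past $\partial_2$; guaranteeing that the $-\partial_1^2 b$ contribution survives as $\|\partial_1\nabla b\|_{L^2}^2$ rather than cancelling; and choosing the order and sizes of the $\epsilon$'s in Young's inequality and of the weights $\lambda_i$ so that the terms generated by the cross functionals (notably $\|\nabla\partial_1 u\|_{L^2}^2$ and the $\epsilon$-weighted copies of $\|\nabla^2 u\|_{L^2}^2$) are genuinely reabsorbed into $\mathcal{D}$ rather than recreated.
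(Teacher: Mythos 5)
Your skeleton coincides with the paper's: the same basic $L^2$ and $\dot H^1$ estimates, the cross functional $\langle b,\partial_1u\rangle$ for the dissipation $\|\partial_1b\|_{L^2}^2$, the substitution of the momentum equation for $\Delta\partial_1u$ paired against $\mathcal{A}b$ together with $\mathbf{P}\partial_1=\partial_1\mathbf{P}$ and $\partial_1b|_{\partial\R^2_+}=0$ to extract $\|\partial_1\nabla b\|_{L^2}^2$, the Stokes regularity of Lemma IV.3.2 for $\|u\|_{H^2}$, $\|\nabla\pi\|_{L^2}$ and $\|\nabla u\|_{H^2}$, the $u_t$ and $\nabla u_t$ estimates, and a bootstrap with small weights. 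Your only structural variant is harmless: you integrate the $\partial_t\partial_1u$ term by parts in time to create the extra cross functional $\langle\partial_1u,\mathcal{A}b\rangle$, whereas the paper keeps $-\langle\partial_t\nabla u,\partial_1\nabla b\rangle$ and absorbs the resulting $\|\nabla u_t\|_{L^2}^2$ by the Step~6 dissipation with $\delta_4<\delta_6$.

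The genuine gap is in your final "bookkeeping" step, and it is exactly where the paper spends most of its effort. You assume every nonlinear term splits as $\epsilon\mathcal{D}$ plus a power of $\widetilde{\mathcal{E}}$, yielding $\frac{d}{dt}\widetilde{\mathcal{E}}+c\mathcal{D}\le C(\widetilde{\mathcal{E}}^2+\widetilde{\mathcal{E}}^3)$. But the cubic terms with two copies of the non-dissipated $\partial_2$-derivatives of $b$ and a single factor of $\nabla u$ --- in the paper's notation $\mathcal{K}=\int(\partial_2b^1)^2\partial_1u^1\,dx$, $\mathcal{T}_1=\langle\partial_2^2b^1,\partial_2\partial_1u^1\,\partial_2b^1\rangle$, $\mathcal{T}_2=\langle\partial_2^2b^1,\partial_2u^2\,\partial_2^2b^1\rangle$ --- can only be bounded by $\mathcal{E}\,\mathcal{D}^{1/2}\le\epsilon\mathcal{D}+C\mathcal{E}^2$, i.e.\ they leave a bare $\mathcal{E}^2$ not multiplied by the dissipation. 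With such a source the Gronwall term $\int_0^t\mathcal{E}^2\,ds$ grows linearly in $t$, so your inequality closes only on an interval whose length enters the constants; it cannot produce the paper's uniform bound $\mathcal{E}(t)+C\mathcal{F}(t)\le C\mathcal{E}(0)+C(\mathcal{E}^{1/2}+\mathcal{E}+\mathcal{E}^{5/4})\mathcal{F}(t)$ in (3.33), which is precisely what the continuation argument after the Proposition needs to reach $T^*=+\infty$ in Theorem 2.1. The missing idea is the paper's second use of the magnetic equation: writing $\partial_1u^1=\partial_tb^1+u\cdot\nabla b^1-b\cdot\nabla u^1$ (and $\partial_2u^2=-\partial_1u^1$) inside $\mathcal{K},\mathcal{T}_1,\mathcal{T}_2$, moving the time derivative onto cubic and quartic correction functionals $\langle(\partial_2b^1)^2,b^1\rangle$, $\langle\partial_2^2b^1,(\partial_2b^1)^2\rangle$, $\langle b^1,(\partial_2^2b^1)^2\rangle$, $\langle(b^1)^2,(\partial_2^2b^1)^2\rangle$ added to the energy, and estimating the quartic remainders --- and this is also the only place where the hypothesis $b^1_0|_{\partial\R^2_+}=0$ (propagated to $b^1|_{\partial\R^2_+}=0$, see the Remark) is genuinely used, in the boundary terms of the ensuing integrations by parts (e.g.\ in $H_{123}$, $K_1$). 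You invoke $b^1|_{\partial\R^2_+}=0$ only to get $\partial_1b|_{\partial\R^2_+}=0$, which already follows from the boundary condition (1.3), so your plan never actually exploits the assumption that makes the scheme close in the dissipative form the paper requires.
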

\begin{proof}
	{\bf Step 1.} $L^2$ estimate of $(u,b)$.
	
	We take the $L^2$ inner product of equations $(\ref{PMHD})_1$ and $(\ref{PMHD})_2$ with u and b, respectively. 
	\begin{equation}\label{4.1}
		\frac12\frac{d}{dt}(\|u\|_{L^2}^2+\|b\|_{L^2}^2)+\|\nabla u\|_{L^2}^2=0
	\end{equation}
	for any $t\in (0,T]$.\\
{\bf Step 2.} $\dot{H}^1$ estimate of $(u,b)$.	

 Define  
$$\mathcal{A}u:=-\mathbf{P}\Delta u$$
for any $u\in L^2_{\sigma}\cap H^1_0\cap H^2$, where $\mathbf{P}$ is the Helmholtz projection.

We take the $L^2$ product of equation $(\ref{PMHD})_1$ with $\mathcal{A}u$ and apply $\nabla$ to equation $(\ref{PMHD})_2$ and take the $L^2$ inner product with $\nabla b$ to obtain
	\begin{eqnarray}\label{4.2}
		&&\frac12\frac{d}{dt}(\|\nabla u\|_{L^2}^2+\|\nabla b\|_{L^2}^2)+\|\mathcal{A}u\|_{L^2}^2\nonumber\\
		&=&\Big(\langle b\cdot\nabla b,\mathcal{A}u\rangle-\langle u\cdot \nabla u,\mathcal{A}u\rangle \Big)+\Big(\langle \nabla(b\cdot\nabla u),\nabla b\rangle-\langle \nabla(u\cdot\nabla b),\nabla b\rangle\Big)\nonumber\\
		&&+\langle \partial_1b,\mathcal{A}u\rangle+\langle \nabla\partial_1u,\nabla b\rangle\nonumber\\
		&:=&I_1+I_2+I_3+I_4.
	\end{eqnarray}
First, the pressure is disappeared since  
\begin{eqnarray*}
\int_{\R^2_+}\nabla \pi \mathcal{A}udx=-\int_{\R^2_+}\mathbf{P}(\nabla \pi)\Delta udx=0.
\end{eqnarray*}	
We will take some estimates to $I_{i}(i=1,2,3,4)$.\\
	For $I_1$, we rewrite it as
	$$I_1=\langle b^1\partial_1b,\mathcal{A}u\rangle+\langle b^2\partial_2b,\mathcal{A}u\rangle-\langle u\cdot \nabla u,\mathcal{A}u\rangle.$$
Using Lemma 2.1 and Lemma 2.2 to obtain
\begin{eqnarray}\label{4.3}
	I_1&\leq& \|b^1\|_{L^\infty}\|\partial_1b\|_{L^2}\|\Delta u\|_{L^2}+\|b^2\|_{L^\infty}\|\partial_2b\|_{L^2}\|\Delta u\|_{L^2}+\|u\|_{L^\infty}\|\nabla u\|_{L^2}\|\Delta u\|_{L^2}\nonumber\\
	&\leq&\|b\|_{H^2}\|\partial_1b\|_{L^2}\|\Delta u\|_{L^2}+\|\partial_1b\|_{H^1}\|\nabla b\|_{L^2}\|\Delta u\|_{L^2}+\|u\|_{H^2}\|\nabla u\|_{H^1}^2\nonumber\\
	&\leq& (\|u\|_{H^2}+\|b\|_{H^2})(\|\nabla u\|_{H^1}^2+\|\partial_1b\|_{H^1}^2).
\end{eqnarray}	
	For $I_2$, we split it into four terms:
	\begin{eqnarray}\label{4.4}
		I_2&=&\Big(\langle \nabla b^1,\nabla (b\cdot \nabla u^1)\rangle-\langle \nabla b^1,\nabla u\cdot \nabla b^1\rangle\Big )+\Big(\langle \nabla b^2,\nabla(b\cdot \nabla u^2)\rangle-\langle \nabla b^2,\nabla u\cdot\nabla b^2\rangle\Big)\nonumber\\
		&:=&I_{21}+I_{22},
	\end{eqnarray}
where integrating by parts yields $\langle u\cdot\nabla \nabla b,\nabla b\rangle=0$.
We rewrite $I_{21}$ as
\begin{eqnarray*}
	I_{21}&=&\Big(\langle \partial_1b^1,\partial_1(b\cdot\nabla u^1)\rangle-\langle \partial_1b^1,\partial_1u\cdot\nabla b^1\rangle\Big)+\langle \partial_2b^1,\partial_2(b\cdot\nabla u^1)\rangle-\langle \partial_2b^1,\partial_2u\cdot \nabla b^1\rangle\\
	&=&\Big( \langle \partial_1b^1,\partial_1b\cdot\nabla u^1\rangle+\langle \partial_1b^1,b\cdot\nabla \partial_1u^1\rangle-\langle \partial_1b^1,\partial_1u\cdot\nabla b^1\rangle\Big)\\
	&&+\langle \partial_2b^1,\partial_2b^1\partial_1 u^1\rangle+\langle \partial_2b^1,b^1\partial_1\partial_2 u^1)\rangle-\langle \partial_2b^1,\partial_2u^1 \partial_1 b^1\rangle\\
	&&+\langle \partial_2b^1,\partial_2b^2\partial_2 u^1\rangle+\langle \partial_2b^1,b^2\partial_2\partial_2 u^1)\rangle-\langle \partial_2b^1,\partial_2u^2 \partial_2 b^1\rangle\\
	&:=&\Big(I_{211}+I_{212}+I_{213}\Big)+\cdots+I_{219}.
\end{eqnarray*}	
It follows from the H$\ddot{o}$lder's inequality and Lemma 2.1 (1-2) that
\begin{eqnarray*}
	\Big(I_{211}+I_{212}+I_{213}\Big)&\leq& \|\partial_1b^1\|_{L^2}\|\nabla b\|_{L^4}\|\nabla u\|_{L^4}+\|\partial_1b^1\|_{L^2}\|b\|_{L^\infty}\|\nabla \partial_1 u^1\|_{L^2}\\
	&\leq& \|\partial_1 b\|_{L^2}\|\nabla b\|_{H^1}\|\nabla u\|_{H^1}+\|\partial_1 b\|_{L^2}\|b\|_{H^2}\|\nabla u\|_{H^1}\\
	&\leq&\| b\|_{H^2}(\|\partial_1b\|_{L^2}^2+\| \nabla u\|_{H^1}^2).
\end{eqnarray*}	 
The higher regularity of $\partial_1b$ comes from the following term
	\begin{eqnarray*}
		I_{215}&=&-\int \partial_1 \partial_2b^1b^1\partial_2 u^1dx-\int \partial_2b^1\partial_1b^1\partial_2u^1dx \\
		&\leq& \|\partial_2u^1\|_{L^2}\|\partial_1\partial_2b^1\|_{L^2}\|b^1\|_{L^\infty}+\|\partial_2 u^1\|_{L^2}\|\partial_2b^1\|_{L^4}\|\partial_1b^1\|_{L^4}  \\
		&\leq& (\|\nabla u\|_{L^2}^2+\|\partial_1 b\|_{H^1}^2)\|b\|_{H^2};
	\end{eqnarray*}
due to $\partial_2b^2=-\partial_1b^1$,
	\begin{eqnarray*}
		I_{217}&=& \int \partial_2b^1\partial_2b^2\partial_2u^1dx=-\int\partial_2b^1\partial_1b^1\partial_2u^1dx\\
		&\leq& \|\partial_2b^1\|_{L^4}\|\partial_1b^1\|_{L^2}\|\partial_2u^1\|_{L^4} \leq\| b\|_{H^2}(\|\partial_1b\|_{L^2}^2+\|\nabla u\|_{H^1}^2).
	\end{eqnarray*}
The higher regularity of $\partial_1b$ also comes from that 
\begin{eqnarray*}
	I_{218}&\leq&\|\nabla b\|_{L^2}\|b^2\|_{L^\infty}\|\nabla^2u\|_{L^2}\leq \|\nabla b\|_{L^2}\|\partial_1b\|_{H^1}\|\nabla^2u\|_{L^2}\\
	&\leq& \|b\|_{H^1}(\|\partial_1b\|_{H^1}^2+\|\nabla u\|_{H^1}^2),
\end{eqnarray*}	
where we have used the Lemma 2.1 (2).\\
Integrating by parts yields
\begin{eqnarray*}
		I_{214}+I_{219}&=&2\int (\partial_2b^1)^2\partial_1u^1dx=2\mathcal{K}.
	%=-4\int \partial_2b^1\partial_1\partial_2b^1 u^1dx\\
	%	& =&4\int \partial_2(\partial_2b^1u^1)\partial_1b^1dx+4\int_{\partial \R^2_+}\partial_2b^1u^1\partial_1b^1dS\\
	%	&=&4\int \partial_2(\partial_2b^1u^1)\partial_1b^1dx\\
	%	&=&4\int \partial_2^2b^1u^1\partial_1b^1+\partial_2u^1\partial_2b^1\partial_1b^1dx\\
	%	&\leq& \|\partial_1b^1\|_{L^4}\|u^1\|_{L^4}\|\partial_2
	%	^2b^1\|_{L^2}+\|\partial_1b^1\|_{L^2}\|\partial_2u^1\|_{L^4}\|\partial_2b^1\|_{L^4}\\
	%	&\leq&\|b\|_{H^2}(\|u\|_{H^1}^2+\|\partial_1b\|_{H^1}^2)+\|\nabla b\|_{H^1}(\|\nabla u\|_{H^1}^2+\|\partial_1b\|_{L^2}^2)\\
	%	&\leq &C\|b\|_{H^2}(\|u\|_{H^2}^2+\|\partial_1b\|_{L^2}^2).
\end{eqnarray*}
Next, we establish the estimate of the term $\mathcal{K}$. Using equation $(\ref{PMHD})_2$, we obtain
\begin{eqnarray*}
	\mathcal{K}&=&\langle (\partial_2b^1)^2,\partial_t b^1\rangle+\langle (\partial_2b^1)^2, u\cdot\nabla b^1-b\cdot\nabla u^1\rangle\\
	&=&\frac{d}{dt}\langle (\partial_2b^1)^2,b^1 \rangle-2\langle \partial_2\partial_tb^1,b^1\partial_2b^1\rangle+\langle (\partial_2b^1)^2,u\cdot\nabla b^1-b\cdot\nabla u^1\rangle \\
	&=&\frac{d}{dt}\langle (\partial_2b^1)^2,b^1 \rangle-2\langle \partial_2(\partial_1u^1-u\cdot\nabla b^1+b\cdot\nabla u^1),b^1\partial_2b^1\rangle\\
	&&+\langle  (\partial_2b^1)^2,u\cdot\nabla b^1-b\cdot\nabla u^1 \rangle.
\end{eqnarray*}
So,
\begin{eqnarray*}
	&&-\frac{d}{dt}\langle (\partial_2b^1)^2,b^1 \rangle+\mathcal{K}\\
	&=&-2\langle \partial_1\partial_2u^1,b^1\partial_2b^1\rangle+2\langle \partial_2u\cdot \nabla b^1,b^1\partial_2b^1\rangle+2\langle u\cdot\nabla \partial_2b^1,b^1\partial_2b^1\rangle\\
	&&-2\langle \partial_2b\cdot\nabla u^1,b^1\partial_2b^1\rangle-2\langle b\cdot\nabla \partial_2u^1,b^1\partial_2b^1\rangle+\langle  (\partial_2b^1)^2,u\cdot\nabla b^1 \rangle\\
	&&-\langle  (\partial_2b^1)^2,b\cdot\nabla u^1 \rangle\\
	&:=&\sum_{i=1}^{7}A_i.
\end{eqnarray*}
It follows that 
\begin{eqnarray*}
	A_3+A_6=\langle u\cdot\nabla (\partial_2b^1)^2,b^1\rangle+\langle  (\partial_2b^1)^2,u\cdot\nabla b^1 \rangle=0;
\end{eqnarray*}
integrating by parts, together with $\partial_1 b|_{\partial \R^2_+}=0$ implies
\begin{eqnarray*}
	A_1&=&-2\langle \partial_2^2u^1, b^1\partial_1b^1\rangle-2\int_{\partial\R^2_+} \partial_2u^1b^1\partial_1b^1dS=-2\langle \partial_2^2u^1, b^1\partial_1b^1\rangle\\
	&\leq& \|\nabla^2u\|_{L^2}\|b\|_{L^\infty}\|\partial_1b^1\|_{L^2}
	\leq\|\nabla u\|_{H^1}\|b\|_{H^2}\|\partial_1b\|_{L^2}\\
	&\leq&\|b\|_{H^2}(\|\nabla u\|_{H^1}^2+\|\partial_1b\|_{L^2}^2).
\end{eqnarray*}
 It follows from (\ref{2.2})-(\ref{2.4}) of Lemma 2.1 that
\begin{eqnarray*}
	A_2+A_4&\leq&\|\nabla u\|_{L^2}\|\nabla b\|_{L^4}\|b^1\|_{L^\infty}\|\nabla b\|_{L^4}\\
	&\leq&\|\nabla u\|_{L^2}\|b\|_{H^2}^{\frac34}\|\partial_1b\|_{H^1}^{\frac14}\|b\|_{H^1}^{\frac12}\|\partial_1b\|_{H^1}^{\frac12}\|b\|_{H^2}^{\frac34}\|\partial_1b\|_{H^1}^{\frac14}\\
	&\leq&\|b\|_{H^2}^2(\|\nabla u\|_{L^2}^2+\|\partial_1b\|_{H^1}^2);
\end{eqnarray*}
\begin{eqnarray*}
	A_5&\leq& \|b\|_{L^\infty}^2\|\nabla^2u\|_{L^2}\|\nabla b\|_{L^2}\leq \|b\|_{H^1}\|\partial_1b\|_{H^1}\|\nabla^2u\|_{L^2}\|\nabla b\|_{L^2}\\
	&\leq&\|b\|_{H^1}^2(\|\nabla u\|_{H^1}^2+\|\partial_1b\|_{H^1}^2);
\end{eqnarray*}
\begin{eqnarray*}
	A_7&\leq& \|b\|_{L^\infty}\|\nabla u\|_{L^2}\|(\partial_2b^1)^2\|_{L^2}\leq \|b\|_{H^1}^{\frac12}\|\partial_1b\|_{H^1}^{\frac12}\|\nabla u\|_{L^2}\|\nabla b\|_{L^4}^2\\
	&\leq&\|b\|_{H^1}^{\frac12}\|\partial_1b\|_{H^1}^{\frac12}\|\nabla u\|_{L^2}\|b\|_{H^2}^{\frac32}\|\partial_1b\|_{H^1}^{\frac12}\\
	&\leq&\|b\|_{H^2}^2(\|\nabla u\|_{L^2}^2+\|\partial_1b\|_{H^1}^2).
\end{eqnarray*}
Hence, we combine all these estimates and derive that 
\begin{equation}\label{4.5}
	I_{21}\leq C(\|b\|_{H^2}+\|b\|_{H^2}^2)(\|\nabla u\|_{H^1}^2+\|\partial_1b\|_{H^1}^2).
\end{equation}
	For $I_{22}$, with the aid of $\partial_2b^2=-\partial_1b^1$ and the interpolation, we have 
	\begin{eqnarray}\label{4.6}
	I_{22}&=&\langle \partial_1b^2,\partial_1(b\cdot\nabla u^2)\rangle+\langle \partial_2b^2,\partial_2(b\cdot\nabla u^2)\rangle-\langle \partial_1b^2,\partial_1u\cdot\nabla b^2\rangle-\langle \partial_2b^2,\partial_2u\cdot\nabla b^2\rangle\nonumber\\
	&\leq&\|\partial_1b\|_{L^2}(\|\partial_1(b\cdot\nabla u^2)\|_{L^2}+\|\partial_2(b\cdot\nabla u^2)\|_{L^2}+\|\partial_1u\cdot\nabla b^2\|_{L^2}+\|\partial_2u\cdot\nabla b^2\|_{L^2})\nonumber\\
	&\leq& \|\partial_1b\|_{L^2}(\|\nabla b\|_{L^4}\|\nabla u\|_{L^4}+\|b\|_{L^\infty}\|\nabla^2u\|_{L^2})\nonumber\\
	&\leq&\|b\|_{H^2}(\| \nabla u\|_{H^1}^2+\|\partial_1b\|_{L^2}^2).
	\end{eqnarray}
For $I_3+I_4$, thanks to Lemma 2.3 (1) and $\partial_1 b|_{\partial\R^2_+}=0$, there holds
\begin{eqnarray}\label{4.7}
	I_3+I_4&=&\int_{\R^2_+} \partial_1b \mathcal{A} udx+\int_{\R^2_+} \nabla \partial_1 u\nabla bdx\nonumber\\
	&=&-\int_{\R^2_+} \partial_1b \Delta udx+\int_{\R^2_+} \nabla \partial_1 u\nabla bdx\nonumber\\ 
	&=&\int_{\R^2_+}\nabla \partial_1b\nabla udx+\int_{\partial \R^2_+}\nabla u\partial_1b\cdot ndS-\int_{\R^2_+}\nabla u\nabla \partial_1bdx\nonumber\\
	&=&0.
\end{eqnarray}
Therefore, by substituting (\ref{4.3})-(\ref{4.7}) into (\ref{4.2}), we have
	\begin{eqnarray}\label{4.8}
	\frac12\frac{d}{dt}(\|\nabla u\|_{L^2}^2+\|\nabla b\|_{L^2}^2)+\|\mathcal{A}u\|_{L^2}^2
	\leq C(\|u\|_{H^2}+\|b\|_{H^2}+\|b\|_{H^2}^2)(\| \nabla u\|_{H^1}^2+\|\partial_1b\|_{H^1}^2).
\end{eqnarray}

{\bf Step 3.} Dissipation estimate of $\partial_1b$.

Taking the $L^2$ product of equations $(\ref{PMHD})_1$ and $(\ref{PMHD})_2$ with $-\partial_1b$ and $\partial_1u$, respectively, then integrating by parts, we obtain
\begin{eqnarray}\label{4.9}
	&&\frac{d}{dt}\langle b,\partial_1u\rangle+\|\partial_1b\|_{L^2}^2-\|\partial_1u\|_{L^2}^2\nonumber\\
	&=&\Big(\langle \partial_1b,u\cdot\nabla u\rangle-\langle \partial_1b,b\cdot\nabla b\rangle\Big)-\Big(\langle \partial_1u,u\cdot \nabla b\rangle+\langle \partial_1u,b\cdot\nabla u\rangle\Big)-\langle \Delta u,\partial_1b\rangle\nonumber\\
	&:=&J_1+J_2+J_3,
\end{eqnarray}
where the first term is derived from $$\langle u_t,-\partial_1b\rangle+\langle b_t,\partial_1u\rangle=\frac{d}{dt}\langle b,\partial_1u\rangle,$$
 since $\nabla\cdot b=0$ and $\partial_1b^2|_{\partial \R^2_+}=0$, we eliminate the pressure term by the following equality
\begin{eqnarray*}
	\int \nabla \pi \partial_1bdx=-\int \pi\partial_1\nabla\cdot bdx+\int_{\partial \R^2_+} \pi\partial_1b\cdot ndS=-\int_{\partial \R^2_+} \pi\partial_1b^2dS=0.
\end{eqnarray*}
For the term $J_1$,
\begin{eqnarray*}
	J_1&=&\langle \partial_1b,u\cdot\nabla u\rangle-\langle \partial_1b,b^1\partial_1b\rangle-\langle \partial_1b,b^2\partial_2b\rangle\\
	&\leq&\|\partial_1b\|_{L^2}\|u\|_{L^\infty}\|\nabla u\|_{L^2}+\|\partial_1b\|_{L^2}^2\|b^1\|_{L^\infty}+\|b^2\|_{L^\infty}\|\partial_1b\|_{L^2}\|\partial_2b\|_{L^2}\\
	&\leq&\|\partial_1b\|_{L^2}\|u\|_{H^2}\|\nabla u\|_{L^2}+\|\partial_1b\|_{L^2}^2\|b\|_{H^2}+\|\partial_1b\|_{H^1}\|\partial_1b\|_{L^2}\|b\|_{H^1}\\
	&\leq&(\|u\|_{H^2}+\|b\|_{H^2})(\|\nabla u\|_{L^2}^2+\|\partial_1b\|_{H^1}^2).
\end{eqnarray*}
Similarly,
\begin{eqnarray*}
	J_2&=&\langle \partial_1u,b\cdot\nabla u\rangle-\langle \partial_1u,u^1\partial_1b\rangle-\langle \partial_1u,u^2\partial_2b\rangle\\
	&\leq&\|\nabla u\|_{L^2}^2\|b\|_{L^\infty}+\|\partial_1u\|_{L^2}\|u^1\|_{L^\infty}\|\partial_1b\|_{L^2}+\|\partial_1u\|_{L^2}\|u^2\|_{L^\infty}\|\partial_2b\|_{L^2}\\
	&\leq&\|\nabla u\|_{L^2}^2\|b\|_{H^2}+\|\nabla u\|_{L^2}\|u\|_{H^2}\|\partial_1b\|_{L^2}+\|\nabla u\|_{H^1}^2\|\partial_2b\|_{L^2}\\
	&\leq&(\|u\|_{H^2}+\|b\|_{H^2})(\|\nabla u\|_{H^1}^2+\|\partial_1b\|_{L^2}^2).
\end{eqnarray*}
For $J_3$,
\begin{eqnarray*}
	J_3&=&-\int_{\R^2_+}\Delta u\partial_1 {\bf P}bdx=\int_{\R^2_+}\mathcal{A}u\partial_1 bdx\\
	&\leq&\frac12\|\partial_1b\|_{L^2}^2+\frac12\|\mathcal{A} u\|_{L^2}^2.
\end{eqnarray*}
Hence,
\begin{eqnarray}\label{4.10}
	&&\frac{d}{dt}\langle b,\partial_1u\rangle+\frac12\|\partial_1b\|_{L^2}^2\nonumber\\
	&\leq&(\|u\|_{H^2}+\|b\|_{H^2})(\|\nabla u\|_{H^1}^2+\|\partial_1b\|_{H^1}^2)+\|\partial_1u\|_{L^2}^2+\frac12\|\mathcal{A} u\|_{L^2}^2.
\end{eqnarray}

{\bf Step 4.} $\dot{H}^2$ estimate of $b$ and dissipation estimate of $\nabla \partial_1b$.

We apply $\Delta$ to $(\ref{PMHD})_2$ and take the $L^2$ inner product with $-\mathcal{A}b$, then
\begin{eqnarray}\label{4.11}
	\frac12\frac{d}{dt}\|\mathcal{A}b\|_{L^2}^2+\langle \Delta\partial_1u,\mathcal{A}b\rangle=\langle \Delta(u\cdot\nabla b-b\cdot\nabla u),\mathcal{A}b\rangle.
\end{eqnarray}
Thanks to Lemma 2.3, we have
\begin{eqnarray*}
	\langle \Delta\partial_1 u,\mathcal{A}b\rangle=\langle \partial_1\mathcal{A}u,\Delta b\rangle&=&\langle \partial_1{\bf P}(b\cdot\nabla b)-\partial_1{\bf P}(u\cdot\nabla u)-\partial_t\partial_1u,\Delta b\rangle+\langle \partial_1^2b,\Delta b\rangle\\
	&=&\langle \partial_1{\bf P}(b\cdot\nabla b)-\partial_1{\bf P}(u\cdot\nabla u)-\partial_t\partial_1u,\Delta b\rangle+\|\partial_1\nabla b\|_{L^2}^2,
\end{eqnarray*}
where in the last term, we have used that
\begin{eqnarray*}
	\langle \partial_1^2b,\Delta b\rangle = -\int_{\R^2_+} \partial_1 b\partial_1\Delta bdx&=&\int_{\R^2_+} |\partial_1\nabla b|^2dx-\int_{\partial \R^2_+} \partial_1\nabla b\partial_1b\cdot ndS\\
	&=&\int_{\R^2_+} |\partial_1\nabla b|^2dx+\int_{\partial \R^2_+} \partial_1\nabla b\partial_1b^2dS\\
	&=&\int_{\R^2_+} |\partial_1\nabla b|^2dx,
\end{eqnarray*}
 due to $\partial_1b^2|_{\partial \R^2_+}=0$.
Hence, (\ref{4.11}) becomes 
\begin{eqnarray}\label{4.12}
	&&\frac12\frac{d}{dt}\|\mathcal{A}b\|_{L^2}^2+\|\partial_1\nabla b\|_{L^2}^2-\langle \partial_t\nabla u,\partial_1\nabla b\rangle\nonumber\\
	&=&\langle \Delta(u\cdot\nabla b-b\cdot\nabla u),\mathcal{A}b\rangle-
	\langle \partial_1{\bf P}(b\cdot\nabla b)-\partial_1{\bf P}(u\cdot\nabla u),\Delta b\rangle\nonumber\\
	&:=&H_1+H_2.
\end{eqnarray}
Now, let us estimate $H_1$ and $H_2$ one by one. The essential observation is that $\Delta(u\cdot\nabla b-b\cdot\nabla u)\in L^2_{\sigma}$. We have
\begin{eqnarray*}
	H_1&=&\langle {\bf P}\Delta(u\cdot\nabla b), \Delta b\rangle-\langle {\bf P}\Delta(b\cdot\nabla u), \Delta b\rangle\\
	&=&\langle \Delta(u\cdot\nabla b), \Delta b\rangle-\langle \Delta(b\cdot\nabla u), \Delta b\rangle\\
	&:=&H_{11}+H_{12}.
\end{eqnarray*}
Due to $\nabla \cdot u=0$, so $\int_{\R^2_+} u\cdot \nabla \Delta b \Delta bdx=0$ and 
\begin{eqnarray*}
	H_{11}&=&\langle \partial_1^2 b, (\Delta u\cdot\nabla b+\nabla u\cdot\nabla^2b) \rangle+\langle \partial_2^2 b^1, (\Delta u\cdot\nabla b^1+2\partial_i u\cdot\nabla \partial_ib^1) \rangle\\
	&&+\langle \partial_2^2 b^2, (\Delta u\cdot\nabla b^2+2\partial_i u\cdot\nabla \partial_ib^2) \rangle\\
	&:=&H_{111}+H_{112}+H_{113}.
\end{eqnarray*}
\begin{eqnarray*}
	H_{111}&\leq&\|\partial_1^2b\|_{L^2}(\|\Delta u\|_{L^4}\|\nabla b\|_{L^4}+\|\nabla u\|_{L^\infty}\|\nabla^2b\|_{L^2})\\
	&\leq& \|b\|_{H^2}(\|\nabla u\|_{H^2}^2+\|\partial_1b\|_{H^1}^2).
\end{eqnarray*}
Similarly, with the aid of $\partial_2b^2=-\partial_1b^1(\nabla\cdot b=0)$,
\begin{eqnarray*}
	H_{113}=-\langle \partial_2\partial_1 b^1, (\Delta u\cdot\nabla b^2+2\partial_i u\cdot\nabla \partial_ib^2) \rangle 	&\leq& \|b\|_{H^2}(\|\nabla u\|_{H^2}^2+\|\partial_1b\|_{H^1}^2).
\end{eqnarray*}
\begin{eqnarray*}
	H_{112}&=&\langle \partial_2^2b^1, \Delta u\cdot \nabla b^1+2\partial_iu\cdot\nabla \partial_ib^1\rangle\\
	&=&\langle \partial_2^2b^1, \Delta u^1\partial_1b^1+2\partial_iu^1\partial_1\partial_ib^1+2\partial_1u^2\partial_2\partial_1b^1\rangle+\langle \partial_2^2b^1,\partial_1^2u^2\partial_2b^1\rangle\\
	&&+\langle \partial_2^2b^1,\partial_2^2u^2\partial_2b^1+2\partial_2u^2\partial_2^2b^1\rangle\\
	&\leq& \|b\|_{H^2}(\|\nabla u\|_{H^2}^2+\|\partial_1b\|_{H^1}^2)+\int_{\R^2_+} \partial_2b^1\partial_1\partial_2b^1\partial_1\partial_2u^2dx\\
	&&+ \langle \partial_2^2b^1,\partial_2^2u^2\partial_2b^1+2\partial_2u^2\partial_2^2b^1\rangle\\
	&\leq& 2\|b\|_{H^2}(\|\nabla u\|_{H^2}^2+\|\partial_1b\|_{H^1}^2)+ \langle \partial_2^2b^1,\partial_2^2u^2\partial_2b^1+2\partial_2u^2\partial_2^2b^1\rangle,
\end{eqnarray*}
where we have used in the third inequality that 
\begin{eqnarray*}
	\langle \partial_2^2b^1,\partial_1^2u^2\partial_2b^1\rangle
	&=&\frac12\int_{\R^2_+}\partial_1^2u^2\partial_2(\partial_2b^1)^2dx\\
	&=&\frac12\int_{\R^2_+} \partial_2\partial_1 u^2\partial_1(\partial_2 b^1)^2dx+\frac12\int_{\partial\R^2_+}\partial_1u^2\partial_1(\partial_2 b^1)^2dS\\
	&=&\int_{\R^2_+}\partial_2b^1\partial_1\partial_2b^1\partial_1\partial_2u^2dx,
\end{eqnarray*}
where the second term is elimated by $\partial_1u^2|_{\partial \R^2_+}=0$ since $u|_{\partial \R^2_+}=(u^1,u^2)|_{\partial \R^2_+}=0$.\\
For $H_{12}$,
\begin{eqnarray*}
	H_{12}&=&\langle \partial_1^2b,\Delta (b\cdot \nabla u)\rangle+\langle \partial_2^2b^1,\partial_1^2(b\cdot \nabla u^1)\rangle+\langle \partial_2^2b^1, \partial_2^2(b\cdot \nabla u^1)\rangle+\langle \partial_2^2b^2,\Delta (b\cdot \nabla u^2)\rangle\\
	&=&  \langle \partial_1^2b,\Delta (b\cdot \nabla u)\rangle+\langle \partial_2^2b^1,\partial_1^2(b\cdot\nabla u^1)\rangle+\langle \partial_2^2b^1, \partial_2^2(b^1\partial_1 u^1)\rangle
	+\langle \partial_2^2b^1, \partial_2^2(b^2\partial_2 u^1)\rangle\\
	&&-\langle \partial_1  \partial_2b^1,\Delta (b\cdot \nabla u^2)\rangle\\
	&:=&H_{121}+H_{122}+H_{123}+H_{124}+H_{125}.
\end{eqnarray*}
\begin{eqnarray*}
	H_{121}&\leq& \|\nabla \partial_1b\|_{L^2}(\|\nabla^2b\|_{L^2}\|\nabla u\|_{L^\infty}+\|\nabla b\|_{L^4}\|\nabla^2u\|_{L^4}+\|b\|_{L^\infty}\|\nabla^3u\|_{L^2})\\
	&\leq&\|b\|_{H^2}(\|\nabla u\|_{H^2}^2+\|\partial_1b\|_{H^1}^2).
\end{eqnarray*}
Similarly,
\begin{eqnarray*}
	H_{125}\leq \|b\|_{H^2}(\|\nabla u\|_{H^2}^2+\|\partial_1b\|_{H^1}^2).
\end{eqnarray*}
Since $u^1=b^2=0$ on $\partial \R^2_+$,
\begin{eqnarray*}
	H_{122}&=&-\int_{\R^2_+} \partial^2_1\partial_2b^1\partial_2 (b\cdot\nabla u^1)dx+\int_{\partial \R^2_+}\partial_1^2\partial_2b^1 (b\cdot\nabla u^1)dS\\
	&=&-\int_{\R^2_+} \partial_1^2\partial_2b^1\partial_2 (b\cdot\nabla u^1)dx+\int_{\partial \R^2_+}\partial_1^2\partial_2b^1 (b^1\partial_1 u^1+b^2\partial_2u^1)dS\\
	&=&\int_{\R^2_+} \partial_1\partial_2b^1\partial_1\partial_2 (b\cdot\nabla u^1)dx\\
	&\leq&\|b\|_{H^2}(\|\nabla u\|_{H^2}^2+\|\partial_1b\|_{H^1}^2),
\end{eqnarray*}
where we have used the similar estimate to $H_{121}$ in the last inequality.\\
In addition, Lemma 2.1 (2) and $b^1|_{\partial \R^2_+}=0(\Leftarrow b^1(0,\cdot)|_{\partial \R^2_+}=0)$ implies that
\begin{eqnarray*}
	H_{123}&=&\langle \partial_2^2b^1, \partial_2^2(b^1\partial_1 u^1)\rangle\\
	&=&\langle \partial_2^2b^1,b^1\partial_2^2\partial_1u^1\rangle+\langle \partial_2^2b^1,\partial_2^2b^1\partial_1u^1+2\partial_2b^1\partial_2\partial_1u^1\rangle\\
	&\leq&|\int_{\R^2_+} \partial_1\partial_2 b^1\partial_2b^1\partial_2^2u^1+b^1\partial_1\partial_2b^1\partial_2^3u^1+\partial_1b^1\partial_2^2b^1\partial_2^2u^1dx|\\
	&&+\langle \partial_2^2b^1,\partial_2^2b^1\partial_1u^1+2\partial_2b^1\partial_2\partial_1u^1\rangle\\
	&\leq& \|\partial_1b\|_{H^1}\|\partial_2b^1\|_{L^4}\|\partial_2^2u^1\|_{L^4}+\|b^1\|_{L^\infty}\|\partial_1\partial_2b^1\|_{L^2}\|\partial_2^3u^1\|_{L^2}\\
	&&+\|\partial_1b^1\|_{L^4}\|\partial_2^2b^1\|_{L^2}\|\partial_2^2u^1\|_{L^4}
	+\langle \partial_2^2b^1,\partial_2^2b^1\partial_1u^1+2\partial_2b^1\partial_2\partial_1u^1\rangle\\
	&\leq&\|b\|_{H^2}(\|\partial_1b\|_{H^1}^2+\|\nabla u\|_{H^2}^2)+\langle \partial_2^2b^1,\partial_2^2b^1\partial_1u^1+2\partial_2b^1\partial_2\partial_1u^1\rangle.
	%&=&-\langle %\partial_1\partial_2^2b^1,b^1\partial_2^2u^1\rangle-\langle %\partial_2^2b^1,\partial_1b^1\partial_2^2u^1\rangle+\langle %\partial_2^2b^1,\partial_2^2b^1\partial_1u^1+2\partial_2b^1\part%ial_2\partial_1u^1\rangle\\
%	&&
\end{eqnarray*}
\begin{eqnarray*}
H_{124}&=&	\langle \partial_2^2b^1, \partial_2^2(b^2\partial_2 u^1)\rangle=	\langle \partial_2^2b^1, \partial_2^2b^2\partial_2 u^1+2\partial_2b^2\partial_2^2u^1+b^2\partial_2^3u^1\rangle\\
&\leq& \|b\|_{H^2}(\|\partial_2\partial_1b^1\|_{L^2}\|\partial_2u^1\|_{L^\infty}+\|\partial_1b^1\|_{L^4}\|\partial_2^2u^1\|_{L^4}+\|b^2\|_{L^\infty}\|\partial_2^3u^1\|_{L^2})\\
&\leq&\|b\|_{H^2}(\|\nabla u\|_{H^2}^2+\|\partial_1b\|_{H^1}^2).
\end{eqnarray*}
For $H_2$, 
\begin{eqnarray*}
	H_2&=&\langle \partial_1\textbf{P}(u\cdot\nabla u), \Delta b\rangle-\langle \partial_1\textbf{P}(b\cdot\nabla b),\Delta b\rangle\\
	&=&\langle \partial_1\nabla b,\nabla \textbf{P}(u\cdot\nabla u)\rangle-\langle \partial_1\textbf{P}(b\cdot\nabla b),\Delta b\rangle\\
	&:=&H_{21}+H_{22}.
\end{eqnarray*}
Since $u|_{\partial \R^2_+}=0$, it holds\cite{2010H,2012H}
\begin{equation*}
	\textbf{P}(u\cdot\nabla u)=u\cdot\nabla u+\sum_{i,j=1}^2\nabla \mathcal{N}\partial_i\partial_j(u^iu^j).
\end{equation*}
By Lemma 2.4, we have
\begin{eqnarray*}
	H_{21}&\leq& \|\partial_1\nabla b\|_{L^2}\|\nabla \Big( u\cdot\nabla u+\sum \nabla \mathcal{N}\partial_i\partial_j (u^iu^j) \Big)\|_{L^2}\\
	&\leq&\|\partial_1\nabla b\|_{L^2}\|\nabla(u\cdot\nabla u)\|_{L^2}+\|\partial_1\nabla b\|_{L^2}\|\nabla  \sum \nabla \mathcal{N}\partial_i\partial_j(u^iu^j)\|_{L^2}\\
	&\leq&\|\partial_1\nabla b\|_{L^2}(\|\nabla u\|_{L^4}^2+\|u\|_{L^\infty}\|\nabla^2u\|_{L^2}+\|u\|_{L^4}^2+\|\nabla u\|_{L^8}\|\nabla^2u\|_{L^8})\\
	&\leq&\|\partial_1\nabla b\|_{L^2}(\|\nabla u\|_{H^1}^2+\|u\|_{H^2}\|\nabla^2u\|_{L^2}+\|u\|_{L^2}\|\nabla u\|_{L^2}+\|\nabla^3 u\|_{L^2}^{\frac34}\|\nabla^2u\|_{L^2}\|\nabla u\|_{L^2}^{\frac14})\\
	&\leq&\| u\|_{H^2}(\|\partial_1 b\|_{H^1}^2+\|\nabla u\|_{H^2}^2).
\end{eqnarray*}
 For $H_{22}$,
\begin{eqnarray*}
	H_{22}&\leq&|\int_{\R^2_+}\partial_1( b\cdot\nabla b)\mathcal{A}bdx|
	=|\int_{\R^2_+}\partial_1 \textbf{P}(b\cdot\nabla b)(\partial_1^2+\partial_2^2) b dx|\\
	&\leq&|\int_{\R^2_+}\partial_1 \textbf{P}(b\cdot\nabla b)\partial_1^2 b dx|+|\int_{\R^2_+}\partial_1 \textbf{P}(b\cdot\nabla b)\partial_2^2 b dx|\\
	&:=&H_{221}+H_{222}.
\end{eqnarray*}
\begin{eqnarray*}
	H_{221}&\leq&\|\textbf{P}\partial_1(b\cdot\nabla b)\|_{L^2}\|\partial_1^2b\|_{L^2}\leq \|\partial_1(b\cdot\nabla b)\|_{L^2}\|\partial_1^2b\|_{L^2}\\
	&\leq&\|\partial_1b\|_{L^4}\|\nabla b\|_{L^4}\|\partial_1^2b\|_{L^2}+\|b\|_{L^\infty}\|\partial_1\nabla b\|_{L^2}\|\partial_1^2b\|_{L^2}\\
	&\leq&\|b\|_{H^2}\|\partial_1b\|_{H^1}^2.
\end{eqnarray*}
%It follows from $b^2=0$ and $b^1=0$ on $\partial \R^2_+$ that
%\begin{equation*}
%	\textbf{P}(b\cdot\nabla b)=b\cdot\nabla b+\sum_{i,j=1}^2\nabla \mathcal{N}\partial_i\partial_j(b^ib^j).
%\end{equation*}
%Similar to $H_{21}$,
%\begin{eqnarray*}
%{\color{red}H_{222}}	&=&|\int_{\R^2_+}\partial_2 \Big(b\cdot\nabla b+\sum \nabla \mathcal{N}\partial_i\partial_j(b^ib^j)\Big)\partial_1\partial_2 bdx|\\
%&\leq&|\int_{\R^2_+}\partial_2 (b\cdot\nabla b)\partial_1\partial_2 bdx|+|\int_{\R^2_+}\partial_2\sum \nabla \mathcal{N}\partial_i\partial_j(b^ib^j)\partial_1\partial_2 bdx|\\
%&=&H_{2221}+{\color{red}H_{2222}}\\
%	&\leq&\|\mathcal{A}b\|_{L^2}(\|\partial_1b\|_{L^4}\|\nabla b\|_{L^4}+\|b\|_{L^\infty}\|\partial_1b\|_{H^1})\\
%	&\leq&\|b\|_{H^2}\|\partial_1b\|_{H^1}\|b\|_{H^2}^{\frac34}\|\partial_1b\|_{H^1}^{\frac14}+\|b\|_{H^2}^2\|\partial_1b\|_{H^1}\\
%	&\leq&\|b\|_{H^2}\|\partial_1b\|_{H^1}(\|b\|_{H^2}+\|\partial_1b\|_{H^1})+\|b\|_{H^2}^2\|\partial_1b\|_{H^1}\\
%	&\leq&(\|b\|_{H^2}^2+\|b\|_{H^2})(\|\partial_1b\|_{H^1}+\|\partial_1b\|_{H^1}^2).
%\end{eqnarray*}
By lemma 2.1, 
\begin{eqnarray*}
H_{222}&\leq& |\int_{\R^2_+}\partial_1\textbf{P}( b\cdot\nabla b)\partial_2^2bdx|\\
	&\leq&\|\partial_2^2b\|_{L^2}(\|\partial_1b\|_{L^4}\|\nabla b\|_{L^4}+\|b\|_{L^\infty}\|\partial_1\nabla b\|_{L^2})\\
	&\leq&\|\partial_2^2b\|_{L^2}\|\partial_1b\|_{H^1}\|b\|_{H^2}^{\frac34}\|\partial_1b\|_{H^1}^{\frac14}+\|\partial_2^2b\|_{L^2}\|b\|_{H^1}^{\frac12}\|\partial_1b\|_{H^1}^{\frac12}\|\partial_1b\|_{H^1}\\
	&\leq&\|b\|_{H^2}^2\|\partial_1b\|_{H^1}^2+\tilde{\epsilon}(\|b\|_{H^2}^2+\|\partial_1b\|_{H^1}^2)+\tilde{\epsilon}(\|b\|_{H^1}^2+\|\partial_1b\|_{H^1}^2)\\
	&\leq&\|b\|_{H^2}^2\|\partial_1b\|_{H^1}^2+\tilde{\epsilon}(\|b\|_{H^2}^2+\|\partial_1b\|_{H^1}^2),
\end{eqnarray*}
where $0<\tilde{\epsilon}\ll 1$ is a constant to be determined.
Hence, (\ref{4.12}) becomes
\begin{eqnarray}\label{4.13}
	&&\frac{d}{dt}\|\mathcal{A}b\|_{L^2}^2+\|\partial_1\nabla b\|_{L^2}^2-\| \partial_t\nabla u\|_{L^2}^2\nonumber\\
	&\leq&C(\|u\|_{H^1}^2+\|b\|_{H^2}+\|b\|_{H^2}^2)(\|\nabla u\|_{H^2}^2+\|\nabla u\|_{H^2}+\|\partial_1b\|_{H^1}+\|\partial_1b\|_{H^1}^2)\nonumber\\
	&&+ \langle \partial_2^2b^1,\partial_2\partial_1u^1\partial_2b^1\rangle + \langle \partial_2^2b^1,\partial_2u^2\partial_2^2b^1\rangle  \nonumber\\
	&:=&C(\|u\|_{H^1}^2+\|u\|_{H^2}+\|b\|_{H^2}+\|b\|_{H^2}^2)(\|\nabla u\|_{H^2}^2%+\|\nabla u\|_{H^2}
	+\|\partial_1b\|_{H^1}^2)+\tilde{\epsilon}(\|b\|_{H^2}^2+\|\partial_1b\|_{H^1}^2)\nonumber\\
	&&+\mathcal{T}_1+\mathcal{T}_2. 
\end{eqnarray}
{\bf Step 4.1.} The estimate of $\mathcal{T}_1$.\\
By $\partial_2u^2=-\partial_1u^1$ and equation $(\ref{PMHD})_2$, we have
\begin{eqnarray}\label{4.14}
	\mathcal{T}_1&=&\langle \partial_2^2b^1\partial_2b^1,\partial_2(-\partial_1u^1)\rangle=
	\langle \partial_2^2b^1\partial_2b^1,\partial_2(-b^1_t+b\cdot\nabla u^1-u\cdot\nabla b^1)\rangle\nonumber\\
	&=&-\frac12\frac{d}{dt}\langle \partial_2^2b^1,(\partial_2b^1)^2\rangle+\frac12\langle \partial_2^2b^1_t,(\partial_2b^1)^2\rangle+\langle \partial_2^2b^1\partial_2b^1,\partial_2(b\cdot\nabla u^1)\rangle\nonumber\\
	&&-\langle \partial_2^2b^1\partial_2b^1,\partial_2u\cdot\nabla b^1+u\cdot\nabla\partial_2b^1\rangle.
\end{eqnarray}
Then by using equation $(\ref{PMHD})_2$ again, we obtain
\begin{eqnarray*}
	\langle (\partial_2b^1)^2,\partial_2^2b_t^1\rangle&=&\langle (\partial_2b^1)^2,\partial_2^2(\partial_1u^1-u\cdot b^1+b\cdot u^1)\rangle\\
	&=&\langle (\partial_2b^1)^2,\partial_2^2\partial_1u^1\rangle-\langle (\partial_2b^1)^2,\partial_2^2u\cdot\nabla b^1+2\partial_2u\cdot \nabla \partial_2b^1+u\cdot\nabla \partial_2^2b^1\rangle\\
	&&+\langle (\partial_2b^1)^2,\partial_2^2(b\cdot\nabla u^1) \rangle\\
	&=&\langle (\partial_2b^1)^2,\partial_2^2\partial_1u^1\rangle-\langle (\partial_2b^1)^2,\partial_2^2u\cdot\nabla b^1+2\partial_2u\cdot \nabla \partial_2b^1\rangle\\
	&& +\langle (\partial_2b^1)^2,u\cdot\nabla(\partial_2b^1)^2,\partial_2^2b^1\rangle+\langle (\partial_2b^1)^2,\partial_2^2(b\cdot\nabla u^1)\rangle,
\end{eqnarray*}
which together with (\ref{4.14}) yields that
\begin{eqnarray}\label{4.15}
	&&\frac12\frac{d}{dt}\langle \partial_2^2b^1,(\partial_2b^1)^2\rangle+\mathcal{T}_1\nonumber\\
	&=&\frac12 \langle (\partial_2b^1)^2,\partial_2^2\partial_1u^1\rangle-\frac12 \langle (\partial_2b^1)^2,\partial_2^2u\cdot\nabla b^1\rangle+\langle (\partial_2b^1)^2, \partial_2u\cdot\nabla \partial_2b^1\rangle\nonumber\\
	&&+\langle \partial_2^2b^1\partial_2b^1,\partial_2u\cdot\nabla b^1\rangle-\langle \partial_2^2b^1\partial_2b^1,\partial_2(b\cdot\nabla u^1)\rangle+\frac12\langle (\partial_2b^1)^2,\partial_2^2(b\cdot\nabla u^1)\rangle\nonumber\\
	&:=&L_i(i=1,2,\cdots,6).
\end{eqnarray}
We now estimate the right terms of (\ref{4.15}) one by one. First,
\begin{eqnarray*}
L_1&=&	\frac12 \langle (\partial_2b^1)^2,\partial_2^2\partial_1u^1\rangle=	- \langle \partial_2b^1\partial_2\partial_1b^1,\partial_2^2u^1\rangle\leq \|\partial_2b^1\|_{L^4}\|\partial_2\partial_1b^1\|_{L^2}\|\partial_2^2u^1\|_{L^4}\\
	&\leq&\|b\|_{H^2}(\|\nabla u\|_{H^2}^2+\|\partial_1b\|_{H^1}^2).
\end{eqnarray*}
Similarly,
\begin{eqnarray*}
	L_2\leq \|b\|_{H^2}(\|\nabla u\|_{H^2}^2+\|\partial_1b\|_{H^1}^2).
\end{eqnarray*}
\begin{eqnarray*}
	L_3&=&\langle (\partial_2b^1)^2,\partial_2u^1\partial_1\partial_2b^1\rangle+\langle (\partial_2b^1)^2,\partial_2u^2\partial_2\partial_2b^1\rangle\\
	&\leq&\|\partial_2u^1\|_{L^\infty}\|\partial_1\partial_2b^1\|_{L^2}\|(\partial_2b^1)^2\|_{L^2}+\frac13\langle \partial_2u^2, \partial_2(\partial_2b^1)^3\rangle\\
	&\leq&\|\nabla u\|_{H^2}\|\partial_1b\|_{H^1}\|b\|_{H^2}^2-\frac13\langle \partial_2u^1, \partial_1(\partial_2b^1)^3\rangle\\
	&\leq& \|b\|_{H^2}^2(\|\nabla u\|_{H^2}^2+\|\partial_1b\|_{H^1}^2)+\|\partial_2u^1\|_{L^\infty}\|\partial_1b\|_{H^1}\|(\partial_2b^1)^2\|_{L^2}\\
	&\leq&  C\|b\|_{H^2}^2(\|\nabla u\|_{H^2}^2+\|\partial_1b\|_{H^1}^2),
\end{eqnarray*}
where we have used that $u=0$ $(\partial_1u^1=0)$ on $\partial\R^2_+$ in second inequality.
\begin{eqnarray*}
	L_4&=&\langle \partial_2^2b^1\partial_2b^1,\partial_2u^1\partial_1b^1+\partial_2u^2\partial_2b^1\rangle\\
	&=&\frac12\langle \partial_2(\partial_2b^1)^2,\partial_2u^1\partial_1b^1\rangle-\frac13\langle \partial_2(\partial_2b^1)^3,\partial_1u^1\rangle\\
	&\leq&\frac12\langle (\partial_2b^1)^2,\partial_2^2u^1\partial_1b^1+\partial_2u^1\partial_1\partial_2b^1\rangle+\langle (\partial_2b^1)^2\partial_1\partial_2b^1,\partial_2u^1\rangle\\
	&\leq&\|(\partial_2b^1)^2\|_{L^2}\|\partial_2^2u^1\|_{L^4}\|\partial_1b^1\|_{L^4}+\|(\partial_2b^1)^2\|_{L^2}\|\partial_2u^1\|_{L^\infty}\|\partial_1\partial_2b^1\|_{L^2}\\
	&\leq& \|b\|_{H^2}^2(\|\nabla u\|_{H^2}^2+\|\partial_1b\|_{H^1}^2).
\end{eqnarray*}
We rewrite $L_5$ and $L_6$ and use $u^1=0(\partial_1u^1=0)$ on $\partial\R^3_+$ to have
\begin{eqnarray*}
	L_5+L_6&=&\langle \partial_2^2b^1\partial_2b^1,\partial_2b^1\partial_1u^1+\partial_2b^2\partial_2u^1+b\cdot\nabla \partial_2u^1\rangle\\
	&&-\frac12\langle (\partial_2b^1)^2,\partial_2^2b^1\partial_1u^1+2\partial_2b^1\partial_2\partial_1u^1+\partial_2^2b^2\partial_2u^1+2\partial_2b^2\partial_2^2u^1+b\cdot\nabla \partial_2^2u^1\rangle\\
	&=&\frac13\langle \partial_2(\partial_2b^1)^3,\partial_1u^1\rangle+\{ \langle \partial_2^2b^1\partial_2b^1,\partial_1b^1\partial_2u^1\rangle+\langle \partial_2^2b^1\partial_2b^1,b\cdot\nabla \partial_2u^1\rangle\\
	&&-\frac12\langle (\partial_2b^1)^2,2\partial_2b^1\partial_2\partial_1u^1-\partial_2\partial_1b^1\partial_2u^1-2\partial_1b^1\partial_2^2u^1+b\cdot\nabla \partial_2^2u^1\rangle \}\\
	&:=&\langle \partial_2\partial_1b^1(\partial_2b^1)^2,\partial_2u^1\rangle+L_{61}\\
	&\leq& \|\partial_1\partial_2b^1\|_{L^2}\|(\partial_2b^1)^2\|_{L^2}\|\partial_2u^1\|_{L^\infty}+L_{61}\\
	&\leq& \|\partial_1b\|_{H^1}\|\partial_2b^1\|_{L^4}^2\|\nabla u\|_{H^2}+L_{61}\\
	&\leq&\|b\|_{H^2}^2(\|\partial_1b\|_{H^1}^2+\|\nabla u\|_{H^2}^2)+L_{61},
\end{eqnarray*}
\begin{eqnarray*}
	L_{61}\leq \|b\|_{H^2}^2(\|\nabla u\|_{H^2}^2+\|\partial_1b\|_{H^1}^2).
\end{eqnarray*}
Hence, (\ref{4.15}) becomes 
\begin{eqnarray}\label{4.16}
	&&\frac{d}{dt}\langle \partial_2^2b^1,(\partial_2b^1)^2\rangle+\mathcal{T}_1\nonumber\\
&\leq&\|b\|_{H^2}^2(\|\nabla u\|_{H^2}^2+\|\partial_1b\|_{H^1}^2).
\end{eqnarray}
{\bf Step 4.2. The estimate of $\mathcal{T}_2$}.\\
We use $\partial_2u^2=-\partial_1u^1$ and $(\ref{PMHD})_2$ to obtain
\begin{eqnarray*}
	\mathcal{T}_2&=&-\langle \partial_2^2b^1,\partial_1u^1\partial_2^2b^1\rangle\\
	&=&-\langle (\partial_2^2b^1)^2,\partial_tb^1\rangle+\langle (\partial_2^2b^1)^2,b\cdot\nabla u^1-u\cdot\nabla b^1\rangle\\
	&=&-\frac{d}{dt}\langle b^1, (\partial_2^2b^1)^2\rangle+2\langle b^1,\partial_2^2b^1\partial_2^2b^1_t\rangle+\langle (\partial_2^2b^1)^2,b\cdot\nabla u^1\rangle+\langle b^1,u\cdot\nabla (\partial_2^2b^1)^2\rangle.
\end{eqnarray*}
In order to estimate the second term on the right hand side of above equation, we employ $(\ref{PMHD})_2$ again to have
\begin{eqnarray*}
	\langle  b^1,\partial_2^2b^1\partial_2^2b^1_t\rangle&=&\langle b^1,\partial_2^2b^1\partial_2^2(\partial_1u^1-u\cdot\nabla b^1+b\cdot\nabla u^1)\rangle\\
	&=&\langle b^1,\partial_2^2b^1\partial_2^2\partial_1u^1\rangle-\langle b^1\partial_2^2b^1,\partial_2^2u\cdot\nabla b^1+2\partial_2u\cdot\nabla\partial_2b^1+u\cdot\nabla\partial_2^2b^1\rangle\\
	&&+\langle b^1,\partial_2^2b^1\partial_2^2(b\cdot\nabla u^1)\rangle\\
	&=&\langle b^1,\partial_2^2b^1\partial_2^2\partial_1u^1\rangle-\langle b^1\partial_2^2b^1,\partial_2^2u\cdot\nabla b^1+2\partial_2u\cdot\nabla\partial_2b^1\rangle\\
	&&-\frac12\langle b^1,u\cdot\nabla(\partial_2^2b^1)^2\rangle+\langle b^1,\partial_2^2b^1\partial_2^2(b\cdot\nabla u^1)\rangle.
\end{eqnarray*}
Together with above equation we obtain
\begin{eqnarray}\label{4.17}
	&&\frac{d}{dt}\langle b^1,(\partial_2^2b^1)^2\rangle+\mathcal{T}_2\nonumber\\
	&=&2\langle b^1,\partial_2^2b^1\partial_2^2\partial_1u^1\rangle-2\langle b^1\partial_2^2b^1,\partial_2^2u\cdot\nabla b^1\rangle-4\langle b^1\partial_2^2b^1,\partial_2u\cdot\nabla\partial_2b^1\rangle\nonumber\\
	&&+2\langle b^1,\partial_2^2b^1\partial_2^2(b\cdot\nabla u^1)\rangle-\langle b\cdot\nabla u^1,(\partial_2^2b^1)^2\rangle\nonumber\\
	&=&K_i(i=1,2,3,\cdots,5).
\end{eqnarray}
We now estimate $K_i$ one by one.
By H\"{o}lder's inequality and $b^1=0$ on $\partial \R^2_+$, we have
\begin{eqnarray*}
	K_1&\leq&|\int_{\R^2_+}\partial_1b^1\partial_2^2b^1\partial_2^2u^1+\partial_1\partial_2b^1\partial_2b^1\partial_2^2u^1+b^1\partial_1\partial_2b^1\partial_2^3u^1dx| \\
	&\leq&\|\partial_1b^1\|_{L^4}\|\partial_2^2b^1\|_{L^2}\|\partial_2^2u^1\|_{L^4}+\|\partial_1\partial_2b^1\|_{L^2}\|\partial_2b^1\|_{L^4}\|\partial_2^2u^1\|_{L^4}\\
	&&+\|b^1\|_{L^\infty}\|\partial_1\partial_2b^1\|_{L^2}\|\partial_2^3u^1\|_{L^2}\\
	&\leq&\|b\|_{H^2}(\|\partial_1b\|_{H^1}^2+\|\nabla u\|_{H^2}^2).
\end{eqnarray*}
\begin{eqnarray*}
	K_2&\leq&\|b^1\|_{L^\infty}\|\partial_2^2b^1\|_{L^2}\|\partial_2^2u\|_{L^2_{x^1}L^\infty_{x^2}}\|\nabla b\|_{L^2_{x^2}L^\infty_{x^1}}\\
	&\leq&\|b^1\|_{H^1}^{\frac12}\|\partial_1b^1\|_{H^1}^{\frac12}\|\partial_2^2b^1\|_{L^2}\|\partial_2^2u\|_{H^1}\|\nabla b\|_{L^2}^{\frac12}\|\nabla\partial_1b\|_{L^2}^{\frac12}\\
	&\leq&\|b\|_{H^2}^2(\|\nabla u\|_{H^2}^2+\|\partial_1b\|_{H^1}^2).
\end{eqnarray*}
\begin{eqnarray*}
	K_3&=&-4\langle b^1\partial_2^2b^1,\partial_2u^1\partial_1\partial_2b^1\rangle+4\langle b^1(\partial_2^2b^1)^2,\partial_2u^2\rangle :=K_{31}+4\mathcal{T}_{21}.
\end{eqnarray*}
It is easy to find that 
\begin{equation*}
	K_{31}\leq \|b\|_{L^\infty}\|\nabla^2b\|_{L^2}\|\nabla u\|_{L^\infty}\|\partial_1\nabla b\|_{L^2}\leq \|b\|_{H^2}^2(\|\nabla u\|_{H^2}^2+\|\partial_1b\|_{H^1}^2).
\end{equation*}
The trouble term $\mathcal{T}_{21}$ is estimated by $\partial_2u^2=-\partial_1u^1$ that
\begin{eqnarray*}
	\mathcal{T}_{21}&=&-\langle b^1_t,b^1(\partial_2^2b^1)^2\rangle-\langle u\cdot\nabla b^1-b\cdot\nabla u^1,b^1(\partial_2^2b^1)^2\rangle\\
	&=&-\frac{1}{2}\frac{d}{dt}\langle (b^1)^2,(\partial_2^2b^1)^2\rangle+\langle (b^1)^2,\partial_2^2b^1\partial_2^2b^1_t\rangle-\langle u\cdot\nabla b^1-b\cdot\nabla u^1,b^1(\partial_2^2b^1)^2 \rangle\\
	&=&-\frac{1}{2}\frac{d}{dt}\langle (b^1)^2,(\partial_2^2b^1)^2\rangle+\langle (b^1)^2,\partial_2^2b^1\partial_2^2(\partial_1u^1-u\cdot\nabla b^1+b\cdot\nabla u^1)\rangle\\
	&&-\langle u\cdot\nabla b^1-b\cdot\nabla u^1,b^1(\partial_2^2b^1)^2 \rangle.
\end{eqnarray*}
Then it deduces from above equations that
\begin{eqnarray*}
&&	\frac12\frac{d}{dt}\langle (b^1)^2,(\partial_2^2b^1)^2\rangle+\mathcal{T}_{21}\\
	&=&\langle (b^1)^2,\partial_2^2b^1\partial_2^2(\partial_1u^1-u\cdot\nabla b^1+b\cdot\nabla u^1)\rangle
	-\langle u\cdot\nabla b^1-b\cdot\nabla u^1,b^1(\partial_2^2b^1)^2 \rangle\\
	&=&\langle (b^1)^2\partial_2^2b^1,\partial_2^2\partial_1u^1-\partial_2^2u\cdot\nabla b^1-2\partial_2u\cdot\nabla \partial_2b^1+\partial_2^2(b\cdot\nabla u^1)\rangle
	+\langle b\cdot\nabla u^1,b^1(\partial_2^2b^1)^2 \rangle,
\end{eqnarray*}
where we have used that by $\nabla\cdot u=0$
$$ \langle (b^1)^2\partial_2^2b^1,u\cdot\nabla \partial_2^2b^1\rangle+\langle u\cdot\nabla b^1,b^1(\partial_2^2b^1)^2\rangle=0.$$
Thus by lemma 2.1, one has
\begin{eqnarray}\label{4.18z}
&&	\frac12\frac{d}{dt}\langle (b^1)^2,(\partial_2^2b^1)^2\rangle+\mathcal{T}_{21} \nonumber\\
	&\leq&\|b\|_{L^\infty}^2\|\nabla^2b\|_{L^2}(\|\nabla^3u\|_{L^2}+\|\nabla^2u\|_{L^4}\|\nabla b\|_{L^4}+\|\nabla u\|_{L^\infty}\|\nabla^2b\|_{L^2}+\|b\|_{L^\infty}\|\nabla^3u\|_{L^2})\nonumber\\
	&\leq&\|b\|_{H^2}^2\|\partial_1b\|_{H^1}(\|\nabla u\|_{H^2}+\|b\|_{H^2}\|\nabla u\|_{H^2})\nonumber\\
	&\leq&(\|b\|_{H^2}^2+\|b\|_{H^2}^3)(\|\nabla u\|_{H^2}^2+\|\partial_1b\|_{H^1}^2).
\end{eqnarray}
Finally, for $K_4$ and $K_5$, 
\begin{eqnarray*}
	K_4+K_5&=&\langle b^1,\partial_2^2b^1\partial_2^2(b^1\partial_1u^1)\rangle+\langle b^1,\partial_2^2b^1\partial_2^2(b^2\partial_2u^1)\rangle+
\langle b^1\partial_1u^1,(\partial_2^2b^1)^2\rangle\\
&&+\langle b^2\partial_2u^1,(\partial_2^2b^1)^2\rangle	\\
&=&3\langle b^1(\partial_2^2b^1)^2,\partial_1u^1\rangle-\Big( 2\langle b^1\partial_2^2b^1,2\partial_2b^1\partial_2\partial_1u^1-\partial_2\partial_1b^1\partial_2u^1-2\partial_1b^1\partial_2^2u^1 \rangle\\
&&+2\langle b^1\partial_2^2b^1,b\cdot\nabla \partial_2^2u^1\rangle+\langle b^2\partial_2u^1,(\partial_2^2b^1)^2 \rangle\Big)\\
&:=&3\mathcal{T}_{21}+K_{41}.
\end{eqnarray*}
It follows from lemma 2.1 and Sobolev's embedding that
\begin{eqnarray*}
	K_{41}&\leq& \|b^1\|_{L^\infty}\|\partial_2^2b^1\|_{L^2}\Big( \|\partial_2 b^1\|_{L^2_{x^2}L^\infty_{x^1}}\|\partial_2\partial_1u^1\|_{L^2_{x^2}L^\infty_{x^1}}+\|\partial_2\partial_1b^1\|_{L^2}\|\partial_2u^1\|_{L^\infty}\\
	&&+\|\partial_1b^1\|_{L^4}\|\partial_2^2u^1\|_{L^4}\Big)+\|b\|_{L^\infty}^2\|\partial_2^2b^1\|_{L^2}\|\nabla \partial_2^2u^1\|_{L^2}+\|b^2\|_{L^\infty}\|\partial_2u^1\|_{L^\infty}\|\partial_2^2b^1\|_{L^2}^2\\
	&\leq& \|b\|_{H^2}^2(\|\nabla u\|_{H^2}^2+\|\partial_1 b\|_{H^1}^2).
\end{eqnarray*}
Hence, (\ref{4.17}) becomes 
\begin{eqnarray}\label{4.18}
	&&\frac{d}{dt}\langle b^1,(\partial_2^2b^1)^2\rangle+\mathcal{T}_2\nonumber\\
	&\leq&7\mathcal{T}_{21}+C(\|b\|_{H^2}^2+\|b\|_{H^2})(\|\partial_1 b\|_{H^1}^2+\|\nabla u\|_{H^2}^2).
\end{eqnarray}	
Combining (\ref{4.18}) with (\ref{4.18z}) yields 
\begin{eqnarray*}
	\frac{7}{2}\frac{d}{dt}\langle (b^1)^2,(\partial_2^2b^1)^2 \rangle+\frac{d}{dt}\langle b^1,(\partial_2^2b^1)^2 \rangle +\mathcal{T}_2\leq C(\|b\|_{H^2}+\|b\|_{H^2}^3)(\|\partial_1b\|_{H^1}^2+\|\nabla u\|_{H^2}^2).
\end{eqnarray*}

{\bf Step 4.3.} 
Combining (\ref{4.13})(\ref{4.16}) with (\ref{4.18}) and considering the interpolation inequality, we have
\begin{eqnarray}\label{4.19}
	&&\frac{d}{dt}\|\mathcal{A}b\|_{L^2}^2+\frac{d}{dt}(\langle \partial_2^2b^1,(\partial_2b^1)^2\rangle+\langle b^1,(\partial_2^2b^1)^2\rangle)+\|\partial_1\nabla b\|_{L^2}^2-\|\partial_t\nabla u\|_{L^2}^2\nonumber\\
	&\leq& C(\|u\|_{H^1}^2+\|(u,b)\|_{H^2}
	%+\|b\|_{H^2}^2
	+\|b\|_{H^2}^{\frac52})(\|\nabla u\|_{H^2}^2%+\|\nabla u\|_{H^2}
	+\|\partial_1b\|_{H^1}^2)+\tilde{\epsilon}(\|b\|_{H^2}^2+\|\partial_1b\|_{H^1}^2).
\end{eqnarray}
{\bf Step 5. Dissipation estimate of $(u_t,b_t)$}\\
Taking the $L^2$ product of equation $(\ref{PMHD})_1$ and $(\ref{PMHD})_2$ with $u_t$ and $b_t$, respectively, we obtain
\begin{eqnarray}\label{4.20}
	&&\frac12\frac{d}{dt}\|\nabla u\|_{L^2}^2+\|(u_t,b_t)\|_{L^2}^2\nonumber\\
	&=&-\langle u_t,u\cdot\nabla u\rangle+\langle u_t,b\cdot\nabla b\rangle-\langle b_t,u\cdot\nabla b\rangle+\langle b_t,b\cdot \nabla u\rangle+(\langle u_t,\partial_1b\rangle+\langle b_t,\partial_1u\rangle)\nonumber\\
	&=&M_1+M_2+M_3+M_4+M_5,
\end{eqnarray}
where the pressure disappears since 
$$\int_{\R^2_+}\nabla \pi u_tdx=-\int_{\R^2_+}\pi \nabla \cdot u_tdx+\int_{\partial \R^2_+}\pi u_t\cdot ndS=0,$$
due to $u(t,\cdot)=0$ on $\partial \R^2_+$ and $\nabla \cdot u(t,\cdot)=0$ in $\R^2$.\\
We now estimate $M_i(i=1,2,3,\cdots,5)$ one by one.\\
\begin{eqnarray*}
	M_1&\leq& \|u_t\|_{L^2}\|u\|_{L^\infty}\|\nabla u\|_{L^2}\leq \|u_t\|_{L^2}\|u\|_{H^2}\|\nabla u\|_{L^2}\\
	&\leq&\|u\|_{H^2}(\|\nabla u\|_{L^2}^2+\|u_t\|_{L^2}^2);
\end{eqnarray*}
\begin{eqnarray*}
	M_2&=& \langle u_t, b^1\partial_1b\rangle+\langle u_t,b^2\partial_2b\rangle\\
	&\leq& \|u_t\|_{L^2}\|b^1\|_{L^\infty}\|\partial_1b\|_{L^2}+\|u_t\|_{L^2}\|b^2\|_{L^\infty}\|\partial_2b\|_{L^2}\\
	&\leq&\|u_t\|_{L^2}\|b\|_{H^2}\|\partial_1b\|_{L^2}+\|u_t\|_{L^2}\|\partial_1b\|_{H^1}\|b\|_{H^1}\\
	&\leq&\|b\|_{H^2}(\|u_t\|_{L^2}^2+\|\partial_1b\|_{H^1}^2);
\end{eqnarray*}
\begin{eqnarray*}
	M_3&=&-\langle b_t,u^1\partial_1b\rangle-\langle b_t,u^2\partial_2b\rangle\\
	&\leq& \|b_t\|_{L^2}\|u^1\|_{L^\infty}\|\partial_1b\|_{L^2}+\|b_t\|_{L^2}\|u^2\|_{L^\infty}\|\partial_2b\|_{L^2}\\
	&\leq&\|b_t\|_{L^2}\|u\|_{H^2}\|\partial_1b\|_{L^2}+\|b_t\|_{L^2}\|\nabla u\|_{H^1}\|b\|_{H^1}\\
	&\leq&(\|u\|_{H^2}+\|b\|_{H^1})(\|b_t\|_{L^2}^2+\|\partial_1b\|_{L^2}^2+\|\nabla u\|_{H^1}^2);
\end{eqnarray*}
\begin{eqnarray*}
	M_4&\leq&\|b_t\|_{L^2}\|b\|_{L^\infty}\|\nabla u\|_{L^2}\leq \|b_t\|_{L^2}\|b\|_{H^2}\|\nabla u\|_{L^2}\\
	&\leq&\|b\|_{H^2}(\|b_t\|_{L^2}^2+\|\nabla u\|_{L^2}^2);
\end{eqnarray*}
\begin{eqnarray*}
	M_5\leq \frac12\|u_t\|_{L^2}^2+\frac12\|b_t\|_{L^2}^2+\|\partial_1b\|_{L^2}^2+\|\nabla u\|_{L^2}^2.
\end{eqnarray*}
Hence, combining these estimates with (\ref{4.20}), we have
\begin{eqnarray}\label{4.21}
	&&\frac{d}{dt}\|\nabla u\|_{L^2}^2+\|(u_t,b_t)\|_{L^2}^2-\|\partial_1b\|_{L^2}^2-\|\nabla u\|_{L^2}^2\nonumber\\
	&\leq& (\|u\|_{H^2}+\|b\|_{H^2})(\|b_t\|_{L^2}^2+\|u_t\|_{L^2}^2+\|\partial_1b\|_{H^1}^2+\|\nabla u\|_{H^1}^2).
\end{eqnarray}

{\bf Step 6. Dissipation estimate of $\nabla u_t$}\\
Applying $\partial_t$ to equation $(\ref{PMHD})_1$ and taking the $L^2$ inner product  of resulting equation with $u_t$, we have
\begin{eqnarray}\label{4.22}
	\frac12\frac{d}{dt}\|u_t\|_{L^2}^2+\|\nabla u_t\|_{L^2}^2&=&-\langle u_t, \partial_t(u\cdot\nabla u)\rangle+\langle u_t,\partial_t(b\cdot\nabla b)\rangle+\langle \partial_1b_t,u_t\rangle\nonumber\\
	&:=&Q_1+Q_2+Q_3.
\end{eqnarray}
\begin{eqnarray*}
	Q_1&=&\langle u,u_t\cdot\nabla u_t,u\rangle-\langle u_t,u\cdot\nabla u_t\rangle
	\leq\|\nabla u_t\|_{L^2}\|u_t\|_{L^2}\|u\|_{L^\infty}\\
	&\leq&\|u\|_{H^2}(\|u_t\|_{L^2}^2+\|\nabla u_t\|_{L^2}^2);
\end{eqnarray*}
\begin{eqnarray*}
	Q_2&=&\langle u_t, b_t\cdot\nabla b\rangle+\langle u_t,b\cdot\nabla b_t\rangle=-\langle b_t\cdot \nabla u_t,b\rangle-\langle b\cdot\nabla u_t,b_t\rangle\\
	&\leq&\|b_t\|_{L^2}\|\nabla u_t\|_{L^2}\|b\|_{L^\infty}
	\leq \|b\|_{H^2}(\|b_t\|_{L^2}^2+\|\nabla u_t\|_{L^2}^2);
\end{eqnarray*}
\begin{eqnarray*}
	Q_3&=&-\int \partial_1u_tb_tdx\leq \frac12\|\nabla u_t\|_{L^2}^2+\|b_t\|_{L^2}^2.
\end{eqnarray*}
Combining these estimates with (\ref{4.22}) yields
\begin{eqnarray}
	\frac{d}{dt}\|u_t\|_{L^2}^2+\|\nabla u_t\|_{L^2}^2-\|b_t\|_{L^2}^2
	\leq (\|u\|_{H^2}+\|b\|_{H^2})(\|\nabla u_t\|_{L^2}^2+\|b_t\|_{L^2}^2). 
\end{eqnarray}

{\bf Step 7. $\dot{H}^2$ estimate of $u$ and the dissipation estimate of $\nabla^2u$.}\\
We rewrite $(\ref{PMHD})_1$ as
\begin{equation}\label{4.24}
	\left\{
	\begin{array}{l}
	-\Delta u +\nabla \pi =	-u_t - u\cdot \nabla u +\partial_1b  + b\cdot\nabla b, ~~x\in \R^2_+,t>0 \\
		\nabla\cdot u  =0, ~~x\in \R^2_+,t>0 \\
		u=0, ~~x\in \partial\R^2_+,t>0\\
		u(x,0)=u_0(x),~~x\in \R^2_+.
	\end{array} \right.
\end{equation}
It follows from the estimates for the steady Stokes flow in half space (Lemma IV.3.2, \cite{2011G}) %\cite{1983MN} 
that
\begin{eqnarray}\label{4.25}
	\|\nabla^2u\|_{L^2}^2+\|\nabla \pi\|_{L^2}^2\leq \|u_t\|_{L^2}^2+\|u\cdot \nabla u\|_{L^2}^2+\|\partial_1b\|_{L^2}^2+\|b\cdot\nabla b\|_{L^2}^2.
\end{eqnarray}
Further, we have
\begin{eqnarray}\label{4.26}
	\|\nabla^2u\|_{L^2}^2+\|\nabla \pi\|_{L^2}^2&\leq& \|u_t\|_{L^2}^2+\|u\cdot \nabla u\|_{L^2}^2+\|b\|_{H^1}^2+\|b\cdot\nabla b\|_{L^2}^2\nonumber\\
	&\leq&\|u_t\|_{L^2}^2+\|b\|_{H^1}^2+\|u\|_{L^4}^2\|\nabla u\|_{L^4}^2+\|b\|_{H^1}^2\|\nabla b\|_{H^1}^2\nonumber\\
	&\leq& \|u_t\|_{L^2}^2+\|b\|_{H^1}^2+\|u\|_{L^2}^2\|\nabla^2u\|_{L^2}^2+\|b\|_{H^1}^2\|\nabla b\|_{H^1}^2.
\end{eqnarray}
 Moreover, we have the $H^1$ estimate of $\nabla^2u$. By Lemma 2.1, we obtain
 \begin{eqnarray}\label{4.27}
 	\|\nabla^2u\|_{H^1}^2+\|\nabla \pi\|_{H^1}^2
 	&\leq& \|u_t\|_{H^1}^2+\|\partial_1b\|_{H^1}^2+\|u\cdot\nabla u\|_{H^1}^2+\|b\cdot\nabla b\|_{H^1}^2\nonumber\\
 	&\leq&\|(u_t,\partial_1 b)\|_{H^1}^2+\|u\nabla u\|_{L^2}^2+\||\nabla u|^2\|_{L^2}^2+\|u\nabla^2u\|_{L^2}^2\nonumber\\
 	&&+\|b^1\|_{L^\infty}^2\|\partial_1b\|_{H^1}^2+\|b^2\|_{L^\infty}^2\|\partial_2b\|_{H^1}^2\nonumber\\
 	&\leq&\|(u_t,\partial_1 b)\|_{H^1}^2+\|u\|_{H^2}^2\|\nabla^2 u\|_{H^1}^2
 	+\|b\|_{H^2}^2\|\partial_1b\|_{H^1}^2.
 \end{eqnarray}

 {\bf Step 8. Closing of the $a~priori$ estimates}\\
 We firstly add these estimates of {\bf Step 1-6} together to obtain that with $\frac{\delta_3}{2}>\delta_5>\delta_6>\delta_4>0$,
 \begin{eqnarray}\label{4.28}
 	&&\frac{d}{dt}\|(u,b)\|_{H^1}^2 +\delta_3\frac{d}{dt}\langle b,\partial_1u\rangle+\delta_4\frac{d}{dt}\Big(\langle \partial_2^2b^1,(\partial_2b^1)^2\rangle+\langle b^1,(\partial_2^2b^1)^2\rangle\Big)
 	+\delta_4\frac{d}{dt}\|\mathcal{A}b\|_{L^2}^2\nonumber\\
 	&&+\delta_6\frac{d}{dt}\|u_t\|_{L^2}^2
 	+\|\nabla u\|_{H^1}^2+\min\{\delta_3,\delta_4\}\|\partial_1b\|_{H^1}^2+\min\{\delta_5,\delta_6\}\|u_t\|_{H^1}^2+\delta_5\|b_t\|_{L^2}^2\nonumber\\
 	&\leq& C(\|(u,b)\|_{H^2}+\|(u,b)\|_{H^2}^2+\|b\|_{H^2}^{\frac52})
 	\times(\|\nabla u\|_{H^2}^2+\|\partial_1b\|_{H^1}^2+\|u_t\|_{H^1}^2+\|b_t\|_{L^2}^2)\nonumber\\
 	&&+\tilde{\epsilon} \|b\|_{H^2}^2.
 \end{eqnarray}
 There exist some suitable small $\delta_3>0$ and $\delta_4>0$ such that (\ref{4.28}) becomes 
 \begin{eqnarray}\label{4.29}
 	&&\frac{d}{dt}\Big(\|(u,b)\|_{H^1}^2 +\delta_4\|\mathcal{A}b\|_{L^2}^2+\delta_6\|u_t\|_{L^2}^2\Big)+\|\nabla u\|_{H^1}^2\nonumber\\
 	&&+\min\{\delta_3,\delta_4,\delta_5,\delta_6\}\Big(\|\partial_1b\|_{H^1}^2+\|u_t\|_{H^1}^2+\|b_t\|_{L^2}^2\Big)\nonumber\\
 &\leq& C(\|(u,b)\|_{H^2}+\|(u,b)\|_{H^2}^2+\|b\|_{H^2}^{\frac52})	\times(\|\nabla u\|_{H^2}^2+\|\partial_1b\|_{H^1}^2+\|u_t\|_{H^1}^2+\|b_t\|_{L^2}^2)\nonumber\\
 &&+\tilde{\epsilon}\|b\|_{H^2}^2.
 \end{eqnarray}
Taking $\delta=\min\{\delta_3,\delta_4,\delta_5,\delta_6\}$ and adding $\frac{\delta}{2}\times(\ref{4.27})$ together with (\ref{4.29}) yields
 \begin{eqnarray}\label{4.30}
	&&\frac{d}{dt}\Big(\|(u,b)\|_{H^1}^2 +\delta_4\|\mathcal{A}b\|_{L^2}^2+\delta_6\|u_t\|_{L^2}^2\Big)+\|\nabla u\|_{H^2}^2+\frac {\delta}{2}\Big(\|\partial_1b\|_{H^1}^2+\|u_t\|_{H^1}^2+\|b_t\|_{L^2}^2\Big)\nonumber\\
	&\leq& C(\|(u,b)\|_{H^2}+\|(u,b)\|_{H^2}^2+\|b\|_{H^2}^{\frac52})	\times(\|\nabla u\|_{H^2}^2+\|\partial_1b\|_{H^1}^2+\|u_t\|_{H^1}^2+\|b_t\|_{L^2}^2)\nonumber\\
	&&+\tilde{\epsilon}\|b\|_{H^2}^2.
\end{eqnarray}
 Integrating time to (\ref{4.30}) from 0 to $t$, we deduce that for $t\in (0,T]$  with the help of Lemma 2.2 (2),
 \begin{eqnarray}\label{4.31}
 	&&\|u\|_{H^1}^2+\|b\|_{H^2}^2+\|u_t\|_{L^2}^2
 	+C(\delta)\int_0^t\|\nabla u\|_{H^2}^2+\|\partial_1b\|_{H^1}^2+\|b_t\|_{L^2}^2+\|u_t\|_{H^1}^2d\tau\nonumber\\
 	&\leq& C(\|u_0\|_{H^1}^2+\|b_0\|_{H^2}^2+\|u_t(0)\|_{L^2}^2)+
 	C\int_0^t(\|(u,b)\|_{H^2}+\|(u,b)\|_{H^2}^2+\|b\|_{H^2}^{\frac52})\nonumber\\
 	&&	\times(\|\nabla u\|_{H^2}^2+\|\partial_1b\|_{H^1}^2+\|u_t\|_{H^1}^2+\|b_t\|_{L^2}^2)dt+C\tilde{\epsilon} T\sup_{t\in [0,T]}\|b(t)\|_{H^2}^2,
 \end{eqnarray} 
where we take $\tilde{\epsilon}$ satisfying  $\tilde{\epsilon}T\leq \frac{1}{2C}$.\\
Adding $\frac12 \times(\ref{4.26})$ together with (\ref{4.31}) together yields
 \begin{eqnarray}\label{4.32}
 && \|(u,b)\|_{H^2}^2+\|u_t\|_{L^2}^2+\|\nabla \pi\|_{L^2}^2+C(\delta)\int_0^t\|\nabla u\|_{H^2}^2+\|b_t\|_{L^2}^2+\|\partial_1b\|_{H^1}^2+\|u_t\|_{H^1}^2d\tau\nonumber\\
 	&\leq& C(\|(u_0,b_0)\|_{H^2}^2+\|u_t(0)\|_{L^2}^2)+	C\int_0^t(\|(u,b)\|_{H^2}+\|(u,b)\|_{H^2}^2+\|b\|_{H^2}^{\frac52})\nonumber\\
 	&&	\times(\|\nabla u\|_{H^2}^2+\|\partial_1b\|_{H^1}^2+\|u_t\|_{H^1}^2+\|b_t\|_{L^2}^2)dt.
 \end{eqnarray}
Finally, we are now in the position to prove the $a~priori$ estimate 
$$\mathcal{E}(t)\leq \frac{\epsilon^2}{2}$$ 
under the $a~priori$ assumption that
$$ \mathcal{E}(t)\leq \epsilon^2.$$ 
Set $\mathcal{E}(t)=\|(u,b)\|_{H^2}^2+\|(u_t,\nabla \pi)\|_{L^2}^2$ and $\mathcal{F}(t)=\int_0^t\|\nabla u\|_{H^2}^2+\|b_t\|_{L^2}^2+\|\partial_1b\|_{H^1}^2+\|u_t\|_{H^1}^2d\tau$.
When $\mathcal{E}(t)\leq \epsilon^2$, it follows from (\ref{4.32}) that
\begin{eqnarray}\label{4.33}
	\mathcal{E}(t)+C(\delta)\mathcal{F}(t)&\leq& C\mathcal{E}(0)+\Big(\mathcal{E}^{\frac12}+\mathcal{E}+\mathcal{E}^{\frac54}\Big)\mathcal{F}(t)\nonumber\\
	&\leq&C\mathcal{E}(0)+C\epsilon\mathcal{F}(t)\nonumber\\
	&\leq&C\mathcal{E}(0).
\end{eqnarray}
Hence, taking $\mathcal{E}(0)\leq \frac{\epsilon^2}{2C}$, we obtain 
$$\mathcal{E}(t)+C\mathcal{F}(t)\leq \frac{\epsilon^2}{2}.$$
This completes Proposition 3.1.
 
\end{proof}
Now, we are in position to give the proof of first part for existence in Theorem 2.1.

 Therefore,  with the suitable initial data, there exists a positive constant $\tilde{C}$ such that the initial total energy $\mathcal{E}(0)\leq \tilde{C}\epsilon$. According to the standard local well-posedness theory which can be obtained by classical arguments, there exists a positive $T$ such that for $C^*=C\tilde{C}$, 
 \begin{equation}\label{4.34}
 	\mathcal{E}(t)+\mathcal{F}(t)\leq C^*\epsilon,~~\forall t\in [0,T].
 \end{equation}
 Let $T^*$ be the largest possible time $T$ satisfying (\ref{4.34}), it is then to show $T^*=+\infty$. Indeed, we can use a standard continuation argument to get the desired result provided that estimate (\ref{4.33}) and $\epsilon$ is small enough.

 \vspace{6mm}
 
 \setcounter{equation}{0}
 \section{ Time asymptotic estimates}
  In this section, notice from (\ref{4.34}) that
  \begin{equation}\label{5.1}
 	\left\{
 	\begin{array}{l}
 	\int_0^\infty \|\nabla u\|_{H^2}^2dt\leq O(1),\\
 	\int_0^\infty \|u_t\|_{H^1}^2dt\leq O(1).
 	\end{array} \right.
 \end{equation} 
 It follows from (\ref{5.1}) that
 \begin{eqnarray}\label{5.2}
 		\int_0^\infty |\frac{d}{dt}\|\nabla u\|_{L^2}^2|dt\leq C\int_0^\infty \|\nabla u\|_{L^2}^2+\|\nabla u_t\|_{L^2}^2dt\leq O(1).
 \end{eqnarray}
 Combining (\ref{5.2}) with $(\ref{5.1})_1$ yields
 \begin{equation}\label{5.3}
 	\lim_{t\rightarrow \infty}\|\nabla u\|_{L^2(\R^2_+)}=0.
 \end{equation} 
 Since the Sobolev's inequality $$\|u\|_{L^p}\leq \|\nabla u\|_{L^2}^\theta\|u\|_{L^2}^{1-\theta}$$ for $2<p<\infty$ and $0<\theta<1$, we obtain
 \begin{equation}\label{5.4}
 \lim_{t\rightarrow \infty}	\|u\|_{L^p}=0,~~2<p<\infty.
 \end{equation}
We now aim to get the asymptotic estimate of magnetic field $b$.
For $2<p<\infty$, 
\begin{eqnarray}\label{5.5}
	\int_0^\infty\|b\|_{L^p}^qd\tau &\leq& \int_0^\infty \|b\|_{L^\infty}^{(1-\frac2p)q}\|b\|_{L^2}^{\frac2pq}d\tau\leq C\int_0^\infty \|b\|_{H^1}^{\frac12(1-\frac2p)q}\|\partial_1b\|_{H^1}^{\frac12(1-\frac2p)q}d\tau \nonumber\\
	&\leq& O(1)\int_0^\infty \|\partial_1b\|_{H^1}^\sigma d\tau\leq O(1),
\end{eqnarray}
where $q\geq \frac{4p}{p-2}$ or $\sigma\geq 2$.
In addition, 
\begin{eqnarray}\label{5.6}
	\int_0^\infty |\frac{d}{dt}\|b\|_{L^p}^q|dt&\leq& O(1)\int_0^\infty \|b\|_{L^p}^{q-p}\|\partial_tb\|_{L^2}\|b\|_{L^{2(p-1)}}^{p-1}dt\nonumber\\
	&\leq& O(1)\int_0^\infty \|\partial_tb\|_{L^2}^2+\|b\|_{L^p}^{2(q-p)}\|b\|_{2(p-1)}^{2(p-1)}dt\nonumber\\
	&\leq& O(1)\int_0^\infty \|\partial_tb\|_{L^2}^2+\|b\|_{L^p}^qdt\nonumber\\
	&\leq& O(1),
\end{eqnarray}
where $q\geq 2p$.
Hence, 
%$$ 
%	\left\{
%	\begin{array}{l}
%		p>2,\\
%		q\geq \frac{4p}{p-2},\\
%		q\geq 2p,
%	\end{array} \right.
%$$
it follows from (\ref{5.5}) and (\ref{5.6})  that for $ 2< p<\infty$, 
\begin{eqnarray*}
	\|b\|_{L^p}\rightarrow0, ~~t\rightarrow\infty.
\end{eqnarray*}
Therefore, that completes the proof of Theorem 2.1.

%%%%%%%%%%%%%%%%%%%%%%%%%%%%%%%

\newpage
\bibliography{reference}

\end{document}